\DeclareMathOperator{\fuse}{F}
\DeclareMathOperator{\AGraph}{\Gamma_{\mathcal{A}}}
\DeclareMathOperator{\EGraph}{\Gamma_{\mathcal{E}}}
\DeclareMathOperator{\correspond}{\sideset{_A}{_E}{\mathop{\longleftrightarrow}}\ }
\newtheorem{thm}{Theorem}[section]
\newtheorem{cor}[thm]{Corollary}
\newtheorem{lemma}[thm]{Lemma}
\newtheorem{prop}[thm]{Proposition}
\numberwithin{equation}{section}
\newtheorem{exam}{Example}
\newproof{proof}{Proof}
\newcommand{\BM}{Bannai-Muzychuk criterion}
\newcommand{\I}{\mathcal{I}}
\newcommand{\J}{\mathcal{J}}
\def\Col{\mathop{\rm Col }\nolimits}
\begin{document}

\begin{frontmatter}

\title{Characterizations of amorphic schemes and fusions of pairs}

\author[1]{Edwin R. van Dam}
\ead{Edwin.vanDam@uvt.nl}

\author[2,3]{Jack H. Koolen}
\ead{koolen@ustc.edu.cn}

\author[4]{Yanzhen Xiong\corref{cor1}}
\ead{xyz920805@ustc.edu.cn}

\cortext[cor1]{Corresponding author}

\affiliation[1]{organization={Department of Econometrics and O.R., Tilburg University},
city={Tilburg},
country={The Netherlands}}

\affiliation[2]{organization={School of Mathematical Sciences, University of Science and Technology of China},
city={Hefei, Anhui},
country={PR China}}

\affiliation[3]{organization={Wen-Tsun Wu Key Laboratory of CAS},
city={Hefei,Anhui},
country={PR China}}

\affiliation[4]{organization={Department of Mathematics, National University of Defense Technology},
city={Changsha, Hunan},
country={PR China}}
            
\begin{abstract}
An association scheme is called amorphic if every possible fusion of relations gives rise to a fusion scheme. We call a pair of relations fusing if fusing that pair gives rise to a fusion scheme. We define the fusing-relations graph on the set of relations, where a pair forms an edge if it fuses. We show that if the fusing-relations graph is connected but not a path, then the association scheme is amorphic. As a side result, we show that if an association scheme has at most one relation that is neither strongly regular of Latin square type nor strongly regular of negative Latin square type, then it is amorphic.
\end{abstract}

\begin{keyword}
association schemes, strongly regular graphs, (negative) Latin square type, amorphic, fusion schemes

\textit{AMS subject classification:} {05E30}
\end{keyword}

\end{frontmatter}

\section{Introduction}

Amorphic association schemes are schemes where every possible fusion of relations gives rise to a fusion scheme. It follows easily that every possible union of relations in such a scheme is strongly regular, and hence no relation, or union of relations generates the whole association scheme.

Amorphic association schemes were first studied by Gol'fand, Ivanov, and Klin \cite{GIK}. Ivanov showed, among other results, that all relations in an amorphic $d$-class association scheme with $d \ge 3$ are strongly regular of Latin square type or negative Latin square type.

Ito, Munemasa, and Yamada \cite{IMY} considered the eigenmatrices of amorphic schemes, showed that they are self-dual, and that any scheme in which the relations are either all strongly regular of Latin square type, or strongly regular of negative Latin square type is amorphic. 

Ivanov in fact conjectured that any scheme in which all relations are strongly regular must be amorphic. This conjecture was disproven in \cite{D2,D3} and resulted in many interesting constructions of --- amorphic and non-amorphic --- association schemes in which all relations are strongly regular \cite{DX,F,F2,IM,IM2}.
For an extensive overview of other results on amorphic schemes, we refer to \cite{vDM2010}.

In this paper, we aim to find conditions for association schemes to be amorphic by considering only fusions of pairs of relations, and dually, fusions of pairs of idempotents.  

We say that a tuple of non-trivial relations in an association scheme {\em fuses} if fusing that tuple of relations results in an association scheme. Dually, we say that a tuple of idempotents  {\em fuses} if fusing that tuple results in an association scheme.

Let $\mathcal{R}$ be a $d$-class association scheme with relations $A_0, A_1, \ldots, A_d$ and idempotents $E_0, E_1, \ldots, E_d$. 
The {\em fusing-relations graph} $\AGraph(\mathcal{R})$ of $\mathcal{R}$ is the graph with vertex set $\{1,2,\ldots,d\}$ such that $i$ and $j$ are joined by an edge if the pair $\{A_i,A_j\}$ fuses. Dually, in the {\em fusing-idempotents graph} $\EGraph(\mathcal{R})$, $i$ and $j$ are adjacent if $\{E_i,E_j\}$ fuses.
It is clear that if $\mathcal{R}$ is amorphic, then $\AGraph(\mathcal{R})$ and $\EGraph(\mathcal{R})$ are both complete graphs.

\begin{exam}\label{ex:clique+vertex}
    The non-amorphic $4$-class primitive counterexamples of  Ivanov's conjecture by Van Dam \cite{D3} (and other such counterexamples by Ikuta and Munemasa \cite{IM}) have eigenmatrix of the form
$$P ={\small \begin{bmatrix} 
1 & k_1 & k_2 & k_2 & k_2 \\
1 & b_1 & a_2 & a_2 & a_2 \\
1 & a_1 & a_2 & b_2 & b_2 \\
1 & a_1 & b_2 & a_2 & b_2 \\
1 & a_1 & b_2 & b_2 & a_2 \\
\end{bmatrix}}.$$
Both fusing-relations graph and fusing-idempotents graph are $K_3 \sqcup K_1$, a triangle plus an isolated vertex. The latter represents the first relation/idempotent. 
\end{exam}

Also the next --- imprimitive --- example has a clique plus an isolated vertex as fusing-relations graph.
We will show later that this example is extremal in the number of edges.

\begin{exam}\label{ex:general-clique+vertex}
    Take $m$ copies of a $(d-1)$-class amorphic association scheme on $v$ vertices and add a complete multipartite relation between the copies, i.e., $\mathcal{R}$ is the wreath product of a $1$-class association scheme on $m$ vertices and an amorphic scheme.
    Then the eigenmatrix of $\mathcal{R}$ is of the form
    \[  P =
    {\small \begin{bmatrix}
        1 & k_1 & k_2 & \cdots & k_{d-1} & (m-1)v \\
        1 & b_1 & a_2 & \cdots & a_{d-1} & 0 \\
        1 & a_1 & b_2 & \cdots & a_{d-1} & 0 \\
        1 & \vdots & \vdots & \ddots & \vdots & \vdots \\
        1 & a_1 & a_2 & \cdots & b_{d-1} & 0 \\
        1 & k_1 & k_2 & \cdots & k_{d-1} & -v 
    \end{bmatrix}}. 
    \] 
    When $d\geq 4$, the fusing-idempotents graph and fusing-relations graph of $\mathcal{R}$ are both $K_{d-1} \sqcup K_1$. 
    Note that the added relation cannot fuse with any other, because then also the ``new'' idempotent fuses with some other, which would imply that $k_i-v$ equals $a_i$ or $b_i$, but this is never the case for Latin square type graphs or negative Latin square type graphs.
\end{exam}

Our final example shows that connectedness of the fusing-relations graph is not sufficient to guarantee that the scheme is amorphic.

\begin{exam}\label{ex:path}
Let $\mathcal{R}$ be the repeated wreath product of $1$-class association schemes.
For example, the eigenmatrix for the wreath product $K_{n_1} \wr K_{n_2} \wr K_{n_3} \wr K_{n_4} \wr K_{n_5}$ is
$$P={\tiny \begin{bmatrix}
1 & n_5n_4n_3n_2(n_1-1) & n_5n_4n_3(n_2-1) & n_5n_4(n_3-1) & n_5(n_4-1) & n_5-1 \\
1 & -n_5n_4n_3n_2 & n_5n_4n_3(n_2-1) & n_5n_4(n_3-1) & n_5(n_4-1) & n_5-1 \\
1 & 0& -n_5n_4n_3 &  n_5n_4(n_3-1) & n_5(n_4-1) & n_5-1 \\
1 & 0& 0& -n_5n_4 &   n_5(n_4-1) & n_5-1 \\
1 & 0& 0& 0& -n_5 &   n_5-1 \\
1 & 0& 0& 0& 0 &   -1 \\
\end{bmatrix}}.$$
Both fusing-relations graph and fusing-idempotents graph are $P_5$, i.e., a path of length $4$, and more generally $P_d$. 
\end{exam}

The above examples turn out to be extremal. Indeed, our main result, Theorem \ref{thm:mainresult}, is that if the fusing-relations graph $\AGraph(\mathcal{R})$ is connected, but not a path, then $\mathcal{R}$ is amorphic.

Our paper is further organized as follows. 
In Section \ref{sec:pre}, we give definitions, recall a result about common eigenspaces of strongly regular graphs, and prove a very practical Lemma~\ref{lem:rowscolsP} about the eigenmatrices of association schemes. In Section~\ref{sec:pairs}, which is our basic starting point, we show that if all pairs of relations fuse, then the scheme is amorphic (Theorem \ref{thm:pairs}). 
In Section \ref{sec:latinsquaresrelations}, we focus on strongly regular graphs of (negative) Latin square type. Here we prove that if all but at most one relation is of Latin square type or negative Latin square type, then the scheme is amorphic. This Theorem \ref{thm:LStype} strengthens the earlier mentioned result by Ito, Munemasa, and Yamada \cite{IMY}, and is relevant both in itself and in the further sections.
In Section \ref{sec:structurefpg}, we prove a lemma about the fusing-relations graph of the fusion scheme. Then, in Section \ref{sec:notapath}, we obtain our main result in Theorem \ref{thm:mainresult}.
Next, we dualize all our tools and results in Section \ref{sec:dualization}, and finish with final remarks and open problems in Section \ref{sec:finalremarks}.

\section{Preliminaries}\label{sec:pre}

\subsection{Association schemes}

Let $X$ be a finite set. For each $i=0,1,\ldots,d$, let $A_i$ be a square $01$-matrix whose rows and columns are indexed by $X$. The configuration $\mathcal{R} = \{A_i : i= 0,1,\ldots,d\}$ on $X$ is called a --- symmetric --- {\em $d$-class association scheme} if there are {\em intersection numbers} $p_{ij}^{h}$ such that

\begin{itemize}
    \item $A_0 = I$,
    \item $A_0+A_1+\cdots+A_d=J$, 
    \item $A_i^{\top}=A_i$ for all $i=0,1,\ldots,d$,
    \item $A_iA_j = \sum_{h=0}^d p_{ij}^{h}A_h$ for all $i,j=0,1,\ldots,d$. 
\end{itemize}

Throughout this paper, all association schemes are symmetric. 
We note that our definition is in terms of adjacency matrices, whereas other --- equivalent --- definitions use the relations (or graphs) on $X$ that they represent. We will use the term relation also, but both for the relation itself as for its adjacency matrix. Occasionally and informally, we will call an association scheme just a {\em scheme}. For basic properties of association schemes, we refer to \cite{BBIT, bi, martintanaka}. We mention that $\mathcal{R}$ spans the so-called Bose-Mesner algebra, which is closed under both ordinary and entrywise multiplication. This algebra also has a basis of minimal {\em idempotents} $\{E_j : j=0,1,\ldots,d\}$, and there are so-called {\em Krein parameters} $q_{ij}^h$ such that 
\begin{itemize}
    \item $vE_0 = J$,
    \item $E_0+E_1+\cdots+E_d=I$, 
    \item $E_j^{\top}=E_j$ for all $j=0,1,\ldots,d$,
    \item $vE_i \circ E_j = \sum_{h=0}^d q_{ij}^{h}E_h$ for all $i,j=0,1,\ldots,d$. 
\end{itemize}
For completeness, we mention that the definitions impose that $A_i \circ A_j = \delta_{i,j}A_i$ and $E_iE_j = \delta_{i,j}E_i$ for all $i$ and $j$.
Furthermore there are the {\em first eigenmatrix} $P$ and the {\em second eigenmatrix} $Q$ containing eigenvalues and dual eigenvalues, respectively, that is, 
\begin{itemize}
    \item $A_i = \sum_{j=0}^d P_{ji}E_j$ for all $i=0,1,\ldots,d$,
    \item $E_j = \frac{1}{v}\sum_{i=0}^d Q_{ij}A_i$ for all $j=0,1,\ldots,d$.
\end{itemize}

In particular, we have that $P_{j0}=1$ for all $j$ and $P_{0i}=p_{ii}^0=:k_i$, the {\em valency} of relation $A_i$ for all $i$. Dually, we have that $Q_{i0}=1$ for all $i$ and $Q_{0j}=q_{jj}^0=:m_j$, the {\em rank} of idempotent $E_j$
for all $j$. 
We also have $PQ = vI$. This implies that both eigenmatrices $P$ and $Q$ have full rank, which we will use in the proof of Lemma~\ref{lem:rowscolsP}.
The {\em principal part} of $P$ and $Q$ is obtained by removing the first row and column; these parts contain the {\em restricted eigenvalues} and {\em restricted dual eigenvalues}.

We will make frequent use of the following lemma about the principal part of the eigenmatrices of an association scheme.

\begin{lemma}\label{lem:rowscolsP} Let $M$ be one of the eigenmatrices of an association scheme. Let $t \geq 2$. For any $t$ rows of the principal part of $M$, there are at least $t$ columns of the principal part of $M$ that are not constant on the $t$ rows.
\end{lemma}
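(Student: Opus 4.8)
The plan is to prove the equivalent statement that for any set $S$ of $t$ rows of the principal part, the number of columns of the principal part that \emph{are} constant on $S$ is at most $d-t$. I would work throughout with the full $(d+1)\times(d+1)$ eigenmatrix $M$ and rely on three of its properties: (i) $M$ is invertible, so any subset of its rows is linearly independent; (ii) its $0$-th column is the all-ones vector ($P_{j0}=1$, resp.\ $Q_{i0}=1$); and (iii) every row $m_j$ of $M$ with $j\ne 0$ has coordinate sum $0$. Property (iii) follows at once from the stated axioms: for $M=P$, writing $J=\sum_i A_i=\sum_j(\sum_i P_{ji})E_j$ and $J=vE_0$ and comparing coefficients in the idempotent basis gives $\sum_{i=0}^d P_{ji}=v\,\delta_{j0}$, and dually $\sum_{j=0}^d Q_{ij}=v\,\delta_{i0}$ from $\sum_j E_j=I=A_0$.

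Now fix $S\subseteq\{1,\dots,d\}$ with $|S|=t$, let $C$ be the set of columns of the principal part that are constant on the rows of $S$, and put $N=\{1,\dots,d\}\setminus C$, the non-constant columns; the goal is $|N|\ge t$. View the $t$ full rows $m_j$ ($j\in S$) as vectors in $\mathbb{R}^{d+1}$. By the definition of $C$ together with property (ii), all these rows agree in every coordinate of $\{0\}\cup C$, so for $j,j'\in S$ the difference $m_j-m_{j'}$ is supported on $N$; by property (iii) it also has coordinate sum $0$. Hence all such differences lie in the subspace $W=\{x\in\mathbb{R}^{d+1}: x_i=0 \text{ for } i\in\{0\}\cup C,\ \sum_i x_i=0\}$, whose dimension is $|N|-1$ (and $W=\{0\}$ if $N=\emptyset$). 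On the other hand, fixing $j_0\in S$, property (i) makes the rows $m_j$ ($j\in S$) linearly independent, so the $t-1$ vectors $m_j-m_{j_0}$ ($j\in S\setminus\{j_0\}$) are linearly independent and lie in $W$. Therefore $t-1\le\dim W$; this forces $N\ne\emptyset$ and then $t-1\le|N|-1$, i.e.\ $|N|\ge t$.

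The one point that I expect needs care — and the reason the lemma is genuinely more than a rank count — is that invertibility alone is too weak: the $t$ full rows agree on the $1+|C|$ coordinates of $\{0\}\cup C$, and an invertible matrix whose $t$ rows agree on $1+|C|$ coordinates only forces $\operatorname{rank}\le 1+(d-|C|)$, i.e.\ $|N|\ge t-1$, one short of the claim. It is precisely the row-sum-zero property (iii) that removes this slack, by cutting the ambient space $W$ containing the row differences down from dimension $|N|$ to dimension $|N|-1$. Since (iii) holds verbatim for both $P$ and $Q$ by the dual computations above, the argument proves the lemma for either eigenmatrix.
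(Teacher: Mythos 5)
Your proposal is correct and rests on exactly the same ingredients as the paper's proof: the full rank of $M$, the all-ones column, and the zero (constant) row sums of the non-trivial rows, combined into a dimension count showing that too few non-constant columns would force the $t$ chosen rows to be linearly dependent. The paper packages this as a column-space/rank bound on the $t\times(d+1)$ submatrix (using constant row sums of the remaining $t-1$ columns), while you phrase it via row differences lying in a subspace of dimension $|N|-1$; these are essentially the same argument in dual clothing.
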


\begin{proof} Suppose that there are $d-t+2$ columns that are constant on the $t$ rows. We first consider the submatrix $M'$ on the $t$ rows, and claim that it has rank at most $t-1$. Indeed, because $M'$ has constant row sums, the $t \times (t-1)$ submatrix $M''$  of $M'$ on these rows and the remaining $t-1$ columns has constant row sums, which implies that $\Col M' =\Col M''$, with dimension clearly at most $t-1$. Thus, $M$ cannot have full rank, which is a contradiction. 
\end{proof}

\subsection{Fusions}
 
For subsets of indices $\I$ and $\J$, we let $A_{\I}=\sum_{i \in \I} A_i$ and similarly $E_{\J}=\sum_{j \in \J} E_j$.
Let $\mathcal{R} = \{A_i : i= 0,1,\ldots,d\}$ be an association scheme. 
Let $\pi = \{\pi(i) : i=0,1,\ldots,d'\}$ be a partition of $\{ 0,1,\ldots,d \}$ with $\pi(0) = \{ 0 \}$. If $\mathcal{R}_{\pi} = \{A_{\pi(i)} : i=0,1,\ldots,d'\}$ is an association scheme, we say that $\mathcal{R}_{\pi}$ is a {\em fusion scheme} of $\mathcal{R}$. 

We say that a tuple of non-trivial relations in an association scheme {\em fuses} if fusing that tuple of relations results in an association scheme.

The {\em Bannai-Muzychuk criterion} \cite{Bsub} \cite[Lemma~2.48]{BBIT} is a very useful way to check whether a partition of relations gives rise to a fusion scheme: a given partition $\pi = \{\pi(i) : i=0,1,\ldots,d'\}$ of $\{0,1,\ldots,d\}$  with $\pi(0)=\{0\}$ gives rise to a fusion scheme $\{A_{\pi(i)}:i=0,\dots,d'\}$ if and only if there is a --- unique --- partition $\rho = \{ \rho(j) : j=0,1,\ldots,d'\}$ of $\{0,1,\ldots,d\}$ with $\rho(0) = \{0\}$ such that each $(\rho(j), \pi(i))$-block of the first eigenmatrix $P$ has constant row sums. If so, then the latter row sum equals the $(j,i)$-entry of the first eigenmatrix of the fusion scheme, which has idempotents $\{E_{\rho(j)}:j=0,\dots,d'\}$. 

As a fusion of relations gives rise to a fusion of idempotents, we can also speak of fusions in terms of idempotents. We thus say that a tuple of idempotents  {\em fuses} if fusing that tuple results in an association scheme. In the remainder of the paper, whenever we mention fusion of relations or idempotents, we always mean non-trivial ones.

We note that the Bannai-Muzychuk criterion also applies to the second eigenmatrix $Q$ \cite[Lemma 1]{Muzy}.

For the sake of readability, we next introduce some notation for the correspondence between fusions of relations and fusions of idempotents.

For a partition $\pi$ of the indices of relations that gives rise to a fusion scheme, we let $\rho$ be the corresponding partition of the indices of idempotents. We then adopt the notation 
\[{\I_1},\dots , {\I_{p}} \correspond {\J_1},\dots ,{\J_{q}}\]
to represent this correspondence, where $\I_1,\ldots,\I_p$ are all the parts of $\pi$ with size at least $2$, and $\J_1,\ldots,\J_q$ are all the parts of $\rho$ with size at least $2$. 

\subsection{Strongly regular graphs}
A graph with $v$ vertices is called {\em strongly regular} with parameters $(v,k,\lambda,\mu)$ if it is non-empty, non-complete, is regular with valency $k$, and any two distinct vertices have $\lambda$ or $\mu$ common neighbors, depending on whether the two vertices are adjacent or not. The complement $A_2$ of a strongly regular graph $A_1$ is also strongly regular, and $\{I,A_1,A_2\}$ forms a $2$-class association scheme. Consequently, a strongly regular graph has two restricted eigenvalues.

For each positive integer $n$, we denote the set $\{1,\ldots,n\}$ by $[n]$.
We now recall a lemma that applies to fusing two strongly regular relations in an association scheme.

\begin{lemma}{\cite[Lemma~2]{D3}}\label{lem:railway}
    Let $\Gamma_1$ and $\Gamma_2$ be edge-disjoint strongly regular graphs on $v$ vertices. For each $i\in[2]$, let $A_i$ be the adjacency matrix, $k_i$ be the valency, and $a_i, b_i$ be the restricted eigenvalues of $\Gamma_i$. If $A_1A_2 = A_2A_1$, then $A_1+A_2$ has restricted eigenvalues $\theta_1 = a_1 + a_2$, $\theta_2 = a_1+b_2$, $\theta_3 = b_1+a_2$ and $\theta_4 = b_1 + b_2$ with respective restricted multiplicities 
    \small{\begin{align*}
        m_1 &= \frac{vb_1b_2 - (k_1-b_1)(k_2-b_2)}{(a_1-b_1)(a_2-b_2)}, &
        m_2 &= -\frac{vb_1a_2 - (k_1-b_1)(k_2-a_2)}{(a_1-b_1)(a_2-b_2)},\\
        m_3 &= -\frac{va_1b_2 - (k_1-a_1)(k_2-b_2)}{(a_1-b_1)(a_2-b_2)}, &
        m_4 &= \frac{va_1a_2 - (k_1-a_1)(k_2-a_2)}{(a_1-b_1)(a_2-b_2)}.
    \end{align*}}
    If $a_i > b_i$ for $i\in[2]$, then $m_2 > 0$ and $m_3 > 0$. 
\end{lemma}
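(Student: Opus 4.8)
The plan is to exploit that $A_1$ and $A_2$ are symmetric and commute, hence are simultaneously orthogonally diagonalizable. Both fix the all-ones vector $\mathbf{1}$, with $A_1\mathbf{1}=k_1\mathbf{1}$ and $A_2\mathbf{1}=k_2\mathbf{1}$, so the orthogonal complement $\mathbf{1}^{\perp}$ is invariant under both and decomposes as a direct sum of (at most) four common eigenspaces $V_{x,y}$ on which $A_1$ acts as the scalar $x\in\{a_1,b_1\}$ and $A_2$ as $y\in\{a_2,b_2\}$. Writing $m_1,m_2,m_3,m_4$ for the dimensions of $V_{a_1,a_2},V_{a_1,b_2},V_{b_1,a_2},V_{b_1,b_2}$ respectively (some possibly $0$), the matrix $A_1+A_2$ acts as $x+y$ on $V_{x,y}$, so its restricted eigenvalues are exactly $\theta_1=a_1+a_2$, $\theta_2=a_1+b_2$, $\theta_3=b_1+a_2$, $\theta_4=b_1+b_2$ with respective multiplicities $m_1,\dots,m_4$. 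It remains to identify these four numbers.

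I would pin down the $m_i$ from four linear equations. Counting dimensions gives $m_1+m_2+m_3+m_4=v-1$. Evaluating $\operatorname{tr}(A_1)=0$ over $\mathbf 1$ and the spaces $V_{x,y}$ gives $k_1+a_1(m_1+m_2)+b_1(m_3+m_4)=0$, and likewise $k_2+a_2(m_1+m_3)+b_2(m_2+m_4)=0$. For the fourth equation I use edge-disjointness: $A_1\circ A_2=0$, so by symmetry $\operatorname{tr}(A_1A_2)=\sum_{i,j}(A_1)_{ij}(A_2)_{ij}=0$, and evaluating this trace on the common eigenspaces yields $k_1k_2+a_1a_2m_1+a_1b_2m_2+b_1a_2m_3+b_1b_2m_4=0$. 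The coefficient matrix of this system, with rows $(1,1,1,1)$, $(a_1,a_1,b_1,b_1)$, $(a_2,b_2,a_2,b_2)$, $(a_1a_2,a_1b_2,b_1a_2,b_1b_2)$, is a row rearrangement of the Kronecker product $\left(\begin{smallmatrix}1&1\\a_1&b_1\end{smallmatrix}\right)\otimes\left(\begin{smallmatrix}1&1\\a_2&b_2\end{smallmatrix}\right)$, whose determinant has absolute value $(a_1-b_1)^2(a_2-b_2)^2\neq 0$ since $a_i\neq b_i$. Solving the system (a routine computation, e.g.\ by Cramer's rule or by using the product structure) produces the stated formulas for $m_1,\dots,m_4$.

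For the positivity of $m_2$ and $m_3$, I would first recall the standard sign facts for a strongly regular graph: from $ab=\mu-k\le 0$ and $a+b=\lambda-\mu$ together with $0<k$ and $\lambda\le k-1$, one deduces that the smaller restricted eigenvalue is negative and the larger lies in $[0,k]$. Hence $b_1<0$, $0\le a_2\le k_2$, and $k_1,k_2>0$. Rewriting $m_2=\dfrac{(k_1-b_1)(k_2-a_2)-vb_1a_2}{(a_1-b_1)(a_2-b_2)}$, the denominator is positive, and the numerator is a sum of the nonnegative terms $(k_1-b_1)(k_2-a_2)$ and $-vb_1a_2$; these cannot both vanish, since that would force $a_2=k_2=0$, contradicting $k_2>0$. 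Therefore $m_2>0$, and the same argument applied to the formula for $m_3$ (using $0\le a_1\le k_1$ and $b_2<0$) gives $m_3>0$.

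The main obstacle I anticipate is purely organizational: correctly justifying the fourth equation via edge-disjointness and then cleanly solving the $4\times 4$ system; the spectral decomposition is immediate, and the final positivity claim reduces entirely to the recalled facts about the signs of the eigenvalues of a strongly regular graph.
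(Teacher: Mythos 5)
Your proof is correct: the simultaneous diagonalization on $\mathbf{1}^{\perp}$, the three trace conditions plus the edge-disjointness condition $\operatorname{tr}(A_1A_2)=0$, the nonsingular Kronecker-structured system, and the sign analysis for $m_2,m_3$ all check out. The paper itself gives no proof (it quotes Lemma~2 of \cite{D3}), and your argument is essentially the standard derivation used there, so there is nothing further to reconcile.
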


A strongly regular graph with $v$ vertices and valency $k$ is of \emph{Latin square type} if $v = n^2$, $k = t(n-1)$, and the restricted eigenvalues are $n-t$, and $-t$, for some positive integers $n$ and $t$. It is of {\em negative Latin square type} if $n$ and $t$ are negative integers. We say that a set of graphs/relations are (strongly regular) of the {\em same type} if they are all strongly regular of Latin square type or they are all strongly regular of negative Latin square type. A {\em conference graph} is a strongly regular graphs with $k=\frac{1}{2}(v-1)$ and restricted eigenvalues $-\frac12 \pm \frac12 \sqrt{v}$. A conference graph on a square number of vertices $v$ is both of Latin square type and of negative Latin square type. In fact, we note that such conference graphs are the only strongly regular graphs that are both of Latin square type and of negative Latin square type. Therefore, we say that a strongly regular graph is of \emph{strictly} (negative) Latin square type if it is of (negative) Latin square type but it is not a conference graph. 

The following follows from Lemma \ref{lem:railway}. 

\begin{lemma}\label{lem:sharingeigenvaluesLS} Let $A_1$ and $A_2$ be two edge-disjoint commuting strongly regular graphs. If $A_1$ is of strictly Latin square type and $A_2$ is of strictly negative Latin square type, then $A_1$ and $A_2$ share eigenvectors for all four combinations of restricted eigenvalues.

If $A_1$ and $A_2$ are both of Latin square type (respectively negative Latin square type), then they do not share an eigenvector for their positive (respectively negative) restricted eigenvalues.
\end{lemma}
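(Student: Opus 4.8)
The plan is to derive both statements directly from Lemma~\ref{lem:railway}. Since $A_1$ and $A_2$ are edge-disjoint commuting strongly regular graphs, the underlying space splits as an orthogonal direct sum of common eigenspaces of $A_1$ and $A_2$, and the numbers $m_1,m_2,m_3,m_4$ of Lemma~\ref{lem:railway} are exactly the dimensions of the common eigenspaces for the combinations $(a_1,a_2)$, $(a_1,b_2)$, $(b_1,a_2)$, $(b_1,b_2)$ of restricted eigenvalues. In particular each $m_i$ is a nonnegative integer, and $A_1$ and $A_2$ share an eigenvector for a given combination precisely when the corresponding $m_i$ is positive. Lemma~\ref{lem:railway} already gives $m_2>0$ and $m_3>0$, so in both parts only $m_1$ and $m_4$ need attention.

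For the first statement, write $v=n_1^2=n_2^2$; since $n_1>0$ and $n_2<0$ we get $n_1=-n_2=:n$. With $t_1$ the parameter of $A_1$ and $t_2=-s$ (so $s>0$) that of $A_2$, we have $a_1=n-t_1$, $b_1=-t_1$, $k_1=t_1(n-1)$, $a_2=s$, $b_2=s-n$, $k_2=s(n+1)$, and $a_1-b_1=a_2-b_2=n$. Substituting into the formulas of Lemma~\ref{lem:railway}, the factor $n^2$ cancels from numerator and denominator and one obtains $m_1=t_1(n-2s-1)$ and $m_4=s(n-2t_1+1)$. Now $m_1=0$ iff $2s=n-1$, i.e.\ iff $k_2=s(n+1)=\frac12(v-1)$, i.e.\ iff $A_2$ is a conference graph; similarly $m_4=0$ iff $2t_1=n+1$, i.e.\ iff $k_1=\frac12(v-1)$, i.e.\ iff $A_1$ is a conference graph. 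As $A_1$ is of strictly Latin square type and $A_2$ of strictly negative Latin square type, neither is a conference graph, so $m_1,m_4\neq 0$; being nonnegative integers, they are positive, and all four combinations are realized.

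For the second statement, suppose first that $A_1$ and $A_2$ are both of Latin square type. Then $v=n_1^2=n_2^2$ with $n_1,n_2>0$ gives $n_1=n_2=:n$, and $a_i=n-t_i$, $b_i=-t_i$, $k_i=t_i(n-1)$; the positive restricted eigenvalue of $A_i$ is $a_i$, so we must show $m_1=0$. Here $a_1-b_1=a_2-b_2=n$, $b_1b_2=t_1t_2$ and $k_i-b_i=t_in$, so the numerator $vb_1b_2-(k_1-b_1)(k_2-b_2)=n^2t_1t_2-(t_1n)(t_2n)$ in the expression for $m_1$ in Lemma~\ref{lem:railway} vanishes, giving $m_1=0$. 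The negative Latin square case is symmetric: writing $t_i=-s_i$ with $s_i>0$ and $n_1=n_2=:n>0$ gives $a_i=s_i$, $b_i=s_i-n$, $k_i=s_i(n+1)$, the negative restricted eigenvalue is $b_i$, and the numerator $va_1a_2-(k_1-a_1)(k_2-a_2)=n^2s_1s_2-(s_1n)(s_2n)$ in the expression for $m_4$ vanishes, so $m_4=0$.

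The step requiring most care is the bookkeeping of restricted eigenvalues: the larger one, $a_i$, is the positive eigenvalue for Latin square type but is the eigenvalue $-t_i$ for negative Latin square type, so the roles of $a_i,b_i$ --- and hence of $m_1,m_4$ --- interchange between the two types, which is why the positive pair is the obstructed one for Latin square type and the negative pair for negative Latin square type. One also uses that $v=n_1^2=n_2^2$ forces $|n_1|=|n_2|$, which is what makes $a_1-b_1=a_2-b_2$ and thus produces the cancellations above; beyond that, everything is a direct substitution into Lemma~\ref{lem:railway}.
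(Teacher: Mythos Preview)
Your proof is correct and follows exactly the route the paper indicates (the paper simply states that the lemma ``follows from Lemma~\ref{lem:railway}'' without spelling out details); your computations of $m_1$ and $m_4$ in both cases are accurate, and the use of strictness to exclude the conference-graph degeneracies $m_1=0$, $m_4=0$ is precisely the point. One small notational wrinkle: in the negative Latin square case you write ``$n_1=n_2=:n>0$'', whereas by definition those $n_i$ are negative --- you are tacitly replacing $n_i$ by $-n_i$; this does not affect the argument, but you may want to say so explicitly.
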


Here sharing an eigenvector for a combination $(\theta_1,\theta_2)$ of restricted eigenvalues of $A_1$ and $A_2$ means that there is a vector that is eigenvector for $A_1$ with eigenvalue $\theta_1$ and eigenvector for $A_2$ with eigenvalue $\theta_2$.

This lemma will allow us to give a stronger characterization of amorphic schemes in Theorem \ref{thm:LStype}, in the sense that we allow a mixture of Latin square type and negative Latin square type graphs.

For completeness we also mention the following well-known facts.

\begin{lemma}\label{lem:union}
    \begin{itemize}
        \item The complement of a strongly regular graph of a (negative) Latin square type is a strongly regular graph of the same type. 
        \item The union of two edge-disjoint and commuting strongly regular graphs of the same type is again a strongly regular graph of the same type.
    \end{itemize}
\end{lemma}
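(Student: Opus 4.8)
The plan is to handle the two bullet points separately, in each case reducing everything to the eigenvalue bookkeeping already available in Lemmas~\ref{lem:railway} and~\ref{lem:sharingeigenvaluesLS}, together with the fact recalled in the subsection above that the complement of a strongly regular graph is strongly regular.

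For the first bullet I would take a strongly regular graph $\Gamma$ of (negative) Latin square type with parameters $n,t$, so that $v=n^2$, the valency is $t(n-1)$, and the restricted eigenvalues are $n-t$ and $-t$. Its complement $\bar\Gamma$ is strongly regular, still on $v=n^2$ vertices, has valency $v-1-t(n-1)=(n-1)(n+1-t)$, and has restricted eigenvalues $-1-(n-t)=-(n+1-t)$ and $-1-(-t)=n-(n+1-t)$. Setting $t':=n+1-t$, an integer, we see that $\bar\Gamma$ has exactly the numerical shape of a (negative) Latin square type graph with parameters $n,t'$, so the only thing left is the sign of $t'$. This is immediate from $\Gamma$ being non-complete: then $\bar\Gamma$ is non-empty, so its valency $(n-1)(n+1-t)$ is positive, and since $n-1$ has the same sign as $n$, so must $n+1-t=t'$; that is, $t'>0$ in the Latin square case and $t'<0$ in the negative Latin square case, as required.

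For the second bullet, let $\Gamma_1,\Gamma_2$ be edge-disjoint, commuting, strongly regular graphs of the same type on a common vertex set, so $v=n_1^2=n_2^2$ forces $n_1=n_2=:n$, of the same sign. Label the restricted eigenvalues of $\Gamma_i$ as $a_i>b_i$: in the Latin square case $(a_i,b_i)=(n-t_i,-t_i)$, and in the negative Latin square case $(a_i,b_i)=(-t_i,n-t_i)$. The key observation is that $a_1+b_2=b_1+a_2$, because $a_i-b_i=|n|$ does not depend on $i$; hence, of the four potential restricted eigenvalues $a_1+a_2$, $a_1+b_2$, $b_1+a_2$, $b_1+b_2$ of $A_1+A_2$ supplied by Lemma~\ref{lem:railway}, two already coincide. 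To finish I would invoke Lemma~\ref{lem:sharingeigenvaluesLS}: two graphs of the same type share no eigenvector for their positive (Latin square case), respectively negative (negative Latin square case), restricted eigenvalues, which is precisely the statement that the corresponding multiplicity $m_j$ in Lemma~\ref{lem:railway} vanishes. What survives is a regular graph with valency $(t_1+t_2)(n-1)$, $v=n^2$ vertices, and the at most two restricted eigenvalues $n-(t_1+t_2)$ and $-(t_1+t_2)$; being regular, non-empty, and non-complete with at most two restricted eigenvalues, it is strongly regular, and its parameters are visibly those of a (negative) Latin square type graph with parameter $t_1+t_2$.

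The computations are routine; the genuine content is matching the ``no common eigenvector'' conclusion of Lemma~\ref{lem:sharingeigenvaluesLS} to the correct multiplicity in Lemma~\ref{lem:railway} --- which one it is depends on whether $a_i$ or $b_i$ is the positive restricted eigenvalue, and hence differs between the two types --- together with spotting the coincidence $a_1+b_2=b_1+a_2$. The one point deserving a word of care is that, before calling $A_1+A_2$ strongly regular, one should know it is not the complete graph; $A_1+A_2=J-I$ would force $\Gamma_2=\bar\Gamma_1$, a degenerate situation that is either excluded at the outset or treated on its own.
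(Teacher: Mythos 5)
Your proposal is correct and takes essentially the same route as the paper: the paper also disposes of the first bullet by a direct eigenvalue check, and proves the second by combining Lemma~\ref{lem:railway} with the second part of Lemma~\ref{lem:sharingeigenvaluesLS} to eliminate the one forbidden eigenvalue combination, using the coincidence $a_1+b_2=b_1+a_2$ to leave exactly the eigenvalues $n-(t_1+t_2)$ and $-(t_1+t_2)$ with valency $(t_1+t_2)(n-1)$. Your extra remark about the degenerate case $A_2=\overline{A_1}$ (where the union is complete) is a fair point of care that the paper's proof silently passes over, but it does not affect how the lemma is used later.
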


\begin{proof}
The first claim is easily checked using the eigenvalues. 

As for the second claim, let $A_1$ and $A_2$ be two such graphs with parameters $v_i = n^2$, $k_i = t_i(n-1)$, and restricted eigenvalues $a_i = -t_i$ and $b_i = (n-t_i)$ for all $i\in \{1,2\}$. 
By Lemma \ref{lem:railway} or the second part of Lemma~\ref{lem:sharingeigenvaluesLS}, the restricted eigenvalues of $A_1+A_2$ are $a_1 + b_2 = a_2 + b_1 = n - (t_1+t_2)$ and $a_1+a_2 = -(t_1 + t_2)$. Note that the valency of $A_1+A_2$ is $k_1+k_2 = (t_1+t_2)(n-1)$. Therefore, $A_1+A_2$ is a strongly regular graph of the same type. 
\end{proof}

Finally, we need the following lemma.

\begin{lemma}\label{lem:SRGkandaNLS}
    Let $\Gamma$ be a strongly regular graph on $n^2$ vertices and valency $k$, having a restricted eigenvalue $a$, such that $k=-a(n-1)$. Then $\Gamma$ is of  Latin square type or negative Latin square type. 
\end{lemma}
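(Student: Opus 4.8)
The plan is to work entirely with the two restricted eigenvalues $r>s$ of $\Gamma$ and reduce the statement to a single elementary identity. Let $A$ be the adjacency matrix and let $f,g$ be the multiplicities of $r,s$. Then $f+g=v-1$, while $\operatorname{tr}A=0$ and $\operatorname{tr}A^{2}=vk$ give $k+fr+gs=0$ and $k^{2}+fr^{2}+gs^{2}=vk$. Eliminating $f$ and $g$ from these three equations yields the single relation
\[
k^{2}-(v+r+s)\,k-(v-1)\,rs=0,
\]
valid for every strongly regular graph; with $v=n^{2}$ this will be the only structural input.

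I would first pin down which restricted eigenvalue $a$ is. Since $\Gamma$ is non-empty, $k>0$ and $s<0$ (the least eigenvalue), and $r\ge 0$; moreover $v=n^{2}\ge 4$, so $n\ne 0,\pm 1$. As $k=-a(n-1)>0$, the factors $a$ and $n-1$ have opposite signs: if $n\ge 2$ then $a=s$, while if $n\le -2$ then $a=r$ (and then $r>0$, since $r=0$ would force $k=0$). Substituting $v=n^{2}$ and $k=-a(n-1)$ into the displayed identity and dividing out the nonzero factors $a$ and $n-1$, a short computation collapses it to $r-s=n$ when $a=s$, and to $r-s=-n$ when $a=r$.

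Finally I would read off the type. Set $t:=-a$. In either case the restricted eigenvalues are $a$ and $a+n$, that is, $-t$ and $n-t$, while $v=n^{2}$ and $k=-a(n-1)=t(n-1)$. If $n\ge 2$ then $t>0$, and if $n\le -2$ then $t<0$, so $n$ and $t$ are integers of the same sign --- unless $\Gamma$ is a conference graph, in which case $r$ and $s$ are irrational. If $\Gamma$ is not a conference graph, it is thus of Latin square type (when $n\ge 2$) or of negative Latin square type (when $n\le -2$); if $\Gamma$ is a conference graph, then $v$ is a square and, as already noted in the text, $\Gamma$ is of both types. In all cases $\Gamma$ is of Latin square type or of negative Latin square type.

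I do not foresee a real obstacle. The key realization is that the hypothesis $k=-a(n-1)$ alone does not determine the eigenvalues, but becomes decisive once it is combined with the universal identity $k^{2}=(v+r+s)k+(v-1)rs$; after that it is only sign bookkeeping, together with the harmless conference-graph degeneracy, where the conclusion still holds because such a graph is simultaneously of Latin square and of negative Latin square type.
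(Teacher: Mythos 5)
Your proof is correct and follows essentially the same route as the paper's: both combine the hypotheses $v=n^2$ and $k=-a(n-1)$ with a standard strongly regular graph parameter identity (your trace identity $k^2=(v+r+s)k+(v-1)rs$ is exactly the paper's $k(k-1-\lambda)=\mu(v-1-k)$ rewritten in terms of the restricted eigenvalues) to conclude that the other restricted eigenvalue is $a+n$, after which the type is read off from the sign of $n$. Your additional bookkeeping on signs, integrality, and the conference-graph case only makes explicit what the paper leaves implicit.
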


\begin{proof}
Let $\Gamma$ have parameters $(v,k,\lambda,\mu)$ and let $b$ be its other restricted eigenvalue. Then $v=n^2$ and it follows from standard identities for strongly regular graphs that $k-1-\lambda=-1-a-b-ab$. On the other hand,
$k-1-\lambda=\mu(v-1-k)/k=-(k+ab)(n+1+a)/a=(n-1-b)(n+1+a)$. By combining the two expressions, we obtain that $n=b-a$, which shows that $\Gamma$ is of Latin square type or negative Latin square type.
\end{proof}

We recall that the sign of $n$ determines the type of $\Gamma$. 

\subsection{Amorphic schemes}

We call $\mathcal{R}$ \emph{amorphic} if $\mathcal{R}_{\pi}$ is an association scheme for all partitions $\pi$ of $\{ 0,1,\ldots,d \}$ with $\pi(0) = \{ 0 \}$. Clearly, all relations (and unions of relations) are strongly regular.
Amorphic association schemes are formally self-dual, i.e., $P=Q$, possibly after rearranging the idempotents. 

We say that a square matrix has a \emph{canonical form} if via permutations of the rows and permutations of the columns it can be made into a matrix of the form 
    \[{\small \begin{bmatrix}
         b_1 & a_2 & a_3 & \cdots & a_{d} \\
         a_1 & b_2 & a_3 & \cdots & a_{d} \\ 
         a_1 & a_2 & b_3 & \cdots & a_{d} \\ 
         \vdots & \vdots & \ddots & \vdots \\
         a_1 & a_2 & a_3 & \cdots & b_{d} \\
    \end{bmatrix}}. \]

According to \cite[Prop.~2]{vDM2010}, an association scheme is amorphic if and only if the principal part of one of its eigenmatrices has a canonical form. 

\section{All pairs}\label{sec:pairs}

In this section, we will show that if all pairs of relations fuse, then the association scheme is amorphic. This is our basic starting result, which will be strongly improved in Section \ref{sec:notapath}.

\begin{lemma}\label{lem:pairs} Let $\mathcal{R}$ be an association scheme. Then there is a one-one correspondence between fusing pairs of relations and fusing pairs of idempotents.
\end{lemma}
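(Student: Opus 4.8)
The plan is to unwind the \BM{} for the fusion of a single pair into a statement about one row‑difference of the first eigenmatrix $P$, and then to transport that statement between $P$ and $Q$ using $P^{-1}=\frac1v Q$. So first I would record what it means for a pair $\{A_i,A_j\}$ (with $i,j\in[d]$) to fuse: this is the partition $\pi$ of $\{0,1,\dots,d\}$ whose only non‑singleton part is $\{i,j\}$. The resulting fusion scheme has $d-1$ classes, so the partition $\rho$ provided by the \BM{} has exactly $d$ parts, one of them $\{0\}$; hence $\rho$ has exactly one part of size $2$, say $\{p,q\}$, and all other parts are singletons. The block conditions for the singleton parts of $\rho$ are vacuous, so the criterion collapses to the two requirements $P_{pk}=P_{qk}$ for every $k\notin\{i,j\}$ and $P_{pi}+P_{pj}=P_{qi}+P_{qj}$. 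Letting $e_\ell$ denote the $\ell$‑th standard basis (column) vector, $\ell=0,1,\dots,d$, these two requirements say exactly that $(e_p-e_q)^{\top}P$ is a scalar multiple of $(e_i-e_j)^{\top}$; the scalar is non‑zero because $P$ has full rank and hence distinct rows. Thus $\{A_i,A_j\}$ fuses if and only if there are $p,q$ and a scalar $\alpha\neq 0$ with
\[(e_p-e_q)^{\top}P=\alpha\,(e_i-e_j)^{\top},\]
and then $\{E_p,E_q\}$ is the corresponding fused pair of idempotents, uniquely determined by $\{i,j\}$ by the uniqueness clause of the \BM{}.

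Next I would multiply this identity on the right by $P^{-1}=\frac1v Q$, obtaining the equivalent identity $(e_i-e_j)^{\top}Q=\frac v\alpha\,(e_p-e_q)^{\top}$ with $\frac v\alpha\neq 0$ (and the implication reverses on multiplying by $Q^{-1}=\frac1v P$). Reading this identity off coordinate‑wise, it states that rows $i$ and $j$ of $Q$ agree outside columns $p,q$ and that $Q_{ip}+Q_{iq}=Q_{jp}+Q_{jq}$ — which, by the same collapse of the block conditions as above, is precisely the \BM{} applied to $Q$ (as noted after its statement) for the pair $\{E_p,E_q\}$ to fuse, with $\{A_i,A_j\}$ as its corresponding (again unique) relation pair. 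Hence $\{A_i,A_j\}$ fuses with corresponding idempotent pair $\{E_p,E_q\}$ if and only if $\{E_p,E_q\}$ fuses with corresponding relation pair $\{A_i,A_j\}$. The assignment sending a fusing pair of relations to its corresponding fusing pair of idempotents and the assignment in the other direction (via $Q$) are therefore mutually inverse, which is the claimed one‑one correspondence.

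The argument is short, and I expect the only delicate points to be bookkeeping ones: the counting step that forces $\rho$ to merge exactly one pair, so that a fusing pair of relations genuinely induces a fusing pair of idempotents rather than some coarser fusion; and the observation that the transfer scalar $\alpha$ is non‑zero, which is exactly where full rank of the eigenmatrices enters. I do not anticipate a real obstacle beyond carefully matching the block conditions of the \BM{} to the above linear‑algebraic reformulation and keeping the role of $P$ versus $Q$ straight.
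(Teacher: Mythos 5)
Your proposal is correct and takes essentially the same route as the paper: both rest on the Bannai--Muzychuk criterion applied to $P$ and to $Q$, the counting observation that merging a single pair on one side forces the corresponding partition to merge exactly one pair on the other side, and the uniqueness clause to obtain the bijection. Your explicit reformulation of the criterion as $(e_p-e_q)^{\top}P=\alpha\,(e_i-e_j)^{\top}$ with the transfer via $PQ=vI$ merely spells out in linear-algebraic terms what the paper's two-sentence argument leaves implicit.
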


\begin{proof} By the \BM, there is a pair of idempotents that fuses for every pair of relations that fuses, and vice versa. Different fusing pairs of relations give rise to different fusion schemes, and hence to different fusing pairs of idempotents, which shows the statement.
\end{proof}

\begin{thm}\label{thm:pairs}
Let $\mathcal{R}$ be an association scheme. If all pairs of relations fuse, then $\mathcal{R}$ is amorphic.
\end{thm}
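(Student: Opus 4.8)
The plan is to show that the hypothesis forces the principal part of the first eigenmatrix $P$ to have the canonical form, which by the cited \cite[Prop.~2]{vDM2010} is equivalent to $\mathcal{R}$ being amorphic. By Lemma~\ref{lem:pairs}, the assumption that all $\binom{d}{2}$ pairs of relations fuse is equivalent to all pairs of idempotents fusing, so $\AGraph(\mathcal{R})$ and $\EGraph(\mathcal{R})$ are both the complete graph $K_d$. The first thing I would extract from this is that every relation $A_i$ is strongly regular: fixing any $i$, fuse $A_i$ with each other $A_j$ in turn; combining the Bannai-Muzychuk row-sum conditions across all these fusions should pin down that $A_i$ has only two restricted eigenvalues, hence $A_i$ is strongly regular, and similarly every $E_j$ has rank-profile of a strongly regular graph. (Alternatively, fusing the pair $\{A_i,A_j\}$ and then invoking closure one more time shows that for every $3$-subset the partial fusion is a scheme; iterating, one gets that any single relation generates at most a $2$-class subscheme.)

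Next I would use the Bannai-Muzychuk criterion more carefully on each fusing pair. When $\{A_i, A_j\}$ fuses, there is a corresponding partition $\rho$ of the idempotent indices, and by Lemma~\ref{lem:rowscolsP} (applied with $t=2$) the two rows of the principal part of $P$ indexed by the parts of $\rho$ that get merged cannot be constant on more than $d-0$... more precisely, the merged pair of idempotent-rows must agree in exactly the right number of columns. The key structural step is to track, for each fusing pair $\{A_i,A_j\}$, which pair of idempotent-rows of $P$ is being identified on which columns, and to show that consistency across all pairs forces: (a) the column of $P$ indexed by $A_i$ takes only two distinct restricted values $a_i$ and $b_i$, and (b) these two-valued columns interlock so that row $j$ of the principal part carries $b_j$ in position $j$ and $a_i$ in position $i\neq j$. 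This is precisely the canonical form. Lemma~\ref{lem:railway} is the quantitative engine here: fusing two edge-disjoint commuting strongly regular relations produces at most four restricted eigenvalues for $A_i+A_j$, but for this to again be a $2$-class contribution in a scheme, two of the four must coincide, which forces a linear relation among $a_i,b_i,a_j,b_j$; running this over all pairs propagates a single global pattern.

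The main obstacle I expect is the bookkeeping that upgrades "each pair individually behaves well" to "a single simultaneous canonical form works for all of them at once" — a priori the partition $\rho$ accompanying the fusion of $\{A_i,A_j\}$ could identify different idempotent-rows than the one accompanying $\{A_i,A_k\}$, and one must rule out incompatible gluings. I would handle this by arguing that the fusion of $\{A_i,A_j\}$ must identify exactly the $i$-th and $j$-th idempotent-rows of the principal part (using that $E_i$ and $E_j$ are the unique idempotents whose fusion corresponds, via Lemma~\ref{lem:pairs}, to this relation-fusion, together with the rank/valency normalizations $P_{0i}=k_i$, $Q_{0j}=m_j$), so the gluings are forced to be mutually consistent. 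Once the identification pattern is shown to be "row $i \leftrightarrow$ column $i$" uniformly, assembling the canonical form is routine, and the theorem follows. A fallback, if the direct eigenmatrix argument gets unwieldy, is to first prove the weaker statement that all relations are strongly regular and then feed this into Theorem~\ref{thm:LStype} (proved later) — but since Theorem~\ref{thm:pairs} is stated as the basic starting point, I would aim to keep this proof self-contained via the canonical-form route.
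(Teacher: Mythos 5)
Your overall target (force the canonical form of the principal part of $P$ and invoke \cite[Prop.~2]{vDM2010}) is the same as the paper's, and you correctly identify the crux: making the gluings coming from the different fusing pairs $\{A_i,A_j\}$ mutually consistent. But your proposed resolution of that crux does not work. There is no a priori bijection between relation indices and idempotent indices, so saying ``$E_i$ and $E_j$ are the unique idempotents whose fusion corresponds to this relation-fusion'' is merely a relabelling convention for one pair, and the normalizations $P_{0i}=k_i$, $Q_{0j}=m_j$ concern only the trivial row and column, giving no link between a relation's index and an idempotent's index. Building that consistent indexing is the entire content of the proof. The paper does it by first extracting, from Lemma~\ref{lem:pairs} together with the Bannai--Muzychuk criterion and the non-singularity of $P$, the key combinatorial fact that \emph{any two rows of the principal part of $P$ differ in exactly two positions}; then applying Lemma~\ref{lem:rowscolsP} with $t=3$ to a third idempotent to force $\{1,2,j\}\correspond\{1,2,j\}$ --- so triples fuse as a \emph{derived} fact, not by ``invoking closure one more time'' (the hypothesis only says pairs of relations of $\mathcal{R}$ fuse, not pairs of relations of the fused scheme $\mathcal{R}'$); and finally a short case analysis showing that $P_{j1}=P_{21}$ for all $j\ge 2$, i.e., each $A_i$ is strongly regular with the eigenvalue $P_{ii}$ confined to the single idempotent $E_i$. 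None of these steps appears in your sketch in checkable form; in particular, the claim that each $A_i$ is strongly regular ``by combining the row-sum conditions'' is unsubstantiated.

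Two further points. Lemma~\ref{lem:railway} plays no role here and your use of it is misdirected: fusing $\{A_i,A_j\}$ yields a $(d-1)$-class scheme, not a $2$-class one, so there is no reason at this stage that two of the four eigenvalue sums must coincide, and the paper's proof never needs such a relation. And the proposed fallback --- prove all relations strongly regular and feed this into Theorem~\ref{thm:LStype} --- fails on its face: that theorem requires (negative) Latin square type, not mere strong regularity, and the counterexamples to Ivanov's conjecture cited in the introduction (e.g., Example~\ref{ex:clique+vertex}) show that a scheme in which every relation is strongly regular need not be amorphic.
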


\begin{proof} Let $d$ be the number of non-trivial relations in the scheme, and assume that all pairs of relations fuse. By Lemma \ref{lem:pairs} also all pairs of idempotents fuse, and there is a one-one correspondence between pairs of relations and pairs of idempotents. If $d = 2$ or $d=3$, then the statement is trivially true. Thus, we may assume that $d \geq 4$. 

Let us now consider the fusion of $\{A_1,A_2\}$. Without loss of generality, we may assume that $\{1,2\} \correspond \{1,2\}$. By the \BM, the eigenvalues of $A_1+A_2$ on $E_1$ and $E_2$ should be the same, i.e., $P_{11}+P_{12}=P_{21}+P_{22}$. Moreover, $A_i$ should have the same eigenvalue on $E_1$ and $E_2$, i.e., $P_{1i}=P_{2i}$, for all $i \geq 3$. Because $P$ is non-singular (hence it has no two equal rows; cf.~Lemma \ref{lem:rowscolsP}), it follows that $P_{11} \neq P_{21}$ and $P_{12} \neq P_{22}$. Similarly, it follows that every pair of rows of the principal part of $P$  differs in precisely two positions.

Consider next a third idempotent, without loss of generality $E_3$. By Lemma \ref{lem:rowscolsP} (with $t=3$), there is at least one relation, without loss of generality $A_3$, for which $P_{33} \neq P_{13} =P_{23}$. Suppose now that $P_{31}\neq P_{11}$ and $P_{31}\neq P_{21}$.
Because every two rows differ in exactly two positions, it then follows that $P_{32}=P_{12}$ and $P_{32}=P_{22}$, which contradicts $P_{12}\neq P_{22}$. Thus, without loss of generality, we may assume that $P_{31}= P_{21}$, which moreover implies that $P_{31} \neq P_{11}$, so $P_{32}=P_{12}$. Besides this, we then have that $P_{1i}=P_{2i}=P_{3i}$ for $i \geq 4$. By the \BM, it now follows that the triple of idempotents $\{E_1,E_2,E_3\}$  fuses, and $\{ 1,2,3 \} \correspond \{ 1,2,3 \}$. 

More generally, it follows that the triple of relations $\{A_1,A_2,A_j\}$ fuses, and $\{ 1,2,j \} \correspond \{ 1,2,j \}$ (at least, without loss of generality). Moreover, we know the structure of the corresponding $3 \times 3$ submatrix, $P^{12j}$ say, of $P$: for any two of its rows the two positions in $P$ where these rows differ are in this submatrix. We will next exploit this and show that $A_1$ is a strongly regular graph, and that the eigenvalue $P_{11}$ occurs only on idempotent $E_1$ (and possibly $E_0$), i.e., that $P_{j1}=P_{21}$ for $j \geq 2$.

Indeed, take $j \geq 4$, and let us instead assume that $P_{j1}=P_{11}$. Then $P_{j2}\neq P_{12}$ and $P_{jj}\neq P_{1j}$ (by the property of $P^{12j}$). Because every pair of rows differs in precisely two positions, it follows that $P_{j3}=P_{13}$. But then $P_{j3} \neq P_{33}$. Moreover, $P_{j1}=P_{11} \neq P_{21}=P_{31}$. So rows $j$ and $3$ differ in positions $1$ and $3$. But then $P_{j2}=P_{32}=P_{12}$, but that is a contradiction, which shows our claim that $P_{j1}=P_{21}$.

Because $A_1$ was chosen arbitrarily, it follows that each of the non-trivial relations $A_i$ is a strongly regular graph and that the eigenvalue $P_{ii}$ occurs for $A_i$ only on idempotent $E_i$ (and possibly $E_0$), possibly after rearranging the idempotents. Because the pair $\{A_i,A_j\}$ fuses, it follows easily that $\{ i,j \} \correspond \{ i,j \}$ and that  $P_{ij}+P_{ii}=P_{jj}+P_{ji}$ for $i,j \neq 0$. By \cite[Prop.~2]{vDM2010}, it now follows that $\mathcal{R}$ is amorphic.
\end{proof}

\section{Schemes with many (negative) Latin square type relations}
\label{sec:latinsquaresrelations}

Ivanov \cite{GIK} showed that the relations in an amorphic $d$-class association scheme with $d \geq 3$ are all strongly regular of the same type. On the other hand, Ito, Munemasa, and Yamada \cite{IMY} showed that any scheme in which all relations are of Latin square type or all relations are of negative Latin square type, is amorphic. 
We will now strengthen this result.

\begin{thm}\label{thm:LStype}
    Let $\mathcal{R}$ be an association scheme. If there is at most one relation in $\mathcal{R}$ that is neither strongly regular of Latin square type nor strongly regular of negative Latin square type, then $\mathcal{R}$ is amorphic. 
\end{thm}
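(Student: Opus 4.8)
The plan is the following. We may assume $d\ge 3$, since every association scheme with at most two classes is amorphic. If some relation is neither strongly regular of Latin square type nor of negative Latin square type, call it $A_d$; otherwise let $A_d$ be arbitrary. Then $A_1,\dots,A_{d-1}$ are strongly regular of (negative) Latin square type, say on $v=n^2$ vertices. The starting point is that such a relation always has its two restricted eigenvalues at distance exactly $n$ from each other (for Latin square type the eigenvalues are $n-t,-t$; for negative Latin square type, writing $v=\nu^2$ with $\nu<0$, they are $\nu-\tau,-\tau$, and $|\nu|=n$; conference graphs give $\tfrac{-1\pm n}{2}$). Writing these eigenvalues as $a_i$ and $b_i=a_i+n$ with $b_i$ the larger one, the $i$-th column of the principal part of $P$ equals $a_i\mathbf 1+n\,\mathbf 1_{B_i}$, where $B_i=\{j\in[d]:P_{ji}=b_i\}$ records the idempotents on which $A_i$ attains its larger eigenvalue. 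Since $A_d=J-I-\sum_{i<d}A_i$, the last column of the principal part of $P$ equals $-(1+\sum_{i<d}a_i)\mathbf 1-n\sum_{i<d}\mathbf 1_{B_i}$. Moreover the principal part of $P$ is invertible: its rows sum to $-1$, so if $x^{\top}$ annihilates it then $x^{\top}\mathbf 1=0$, and then $(0,x)^{\top}P=0$, forcing $x=0$ (compare the proof of Lemma~\ref{lem:rowscolsP}). Hence no nontrivial linear dependence holds among the rows of the principal part of $P$, a fact I will contradict in the crossing case below.

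Next I would translate Lemma~\ref{lem:sharingeigenvaluesLS} into conditions on the sets $B_i$ (with $b_i$ fixed as the larger eigenvalue throughout): for two indices of Latin square type, $B_i\cap B_{i'}=\varnothing$; for two of negative Latin square type, $B_i\cup B_{i'}=[d]$; and if $A_i$ is strictly of Latin square type while $A_{i'}$ is strictly of negative Latin square type, then $B_i$ and $B_{i'}$ \emph{cross}, i.e.\ all four sets $B_i\cap B_{i'}$, $B_i\setminus B_{i'}$, $B_{i'}\setminus B_i$, $[d]\setminus(B_i\cup B_{i'})$ are nonempty. If no crossing pair occurs among $A_1,\dots,A_{d-1}$, then these relations are all of a single type (conference graphs among them, being of both types, may be declared to be of that type); then $S=A_1+\dots+A_{d-1}$ is strongly regular of that type by iterated use of Lemma~\ref{lem:union}, so is its complement $A_d$, whence all $d$ relations are strongly regular of the same type and $\mathcal R$ is amorphic by \cite{IMY}. (Alternatively one finishes directly after orienting so that the $B_i$ are pairwise disjoint: pairwise disjoint nonempty proper subsets of $[d]$ have $\sum_{i<d}|B_i|\le d$, so they are distinct singletons — the case where one has size $2$ would force $\bigsqcup_i B_i=[d]$, hence a single restricted eigenvalue for $A_d$, impossible in a $d$-class scheme with $d\ge 2$ — and after reindexing the idempotents the principal part of $P$ is in canonical form, so $\mathcal R$ is amorphic by \cite[Prop.~2]{vDM2010}.)

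The heart of the matter, and the step I expect to be the main obstacle, is to rule out the crossing case: no relation strictly of Latin square type and no relation strictly of negative Latin square type may coexist among $A_1,\dots,A_{d-1}$. For $d=3$ this is immediate, since a crossing pair has four nonempty common eigenspaces by Lemma~\ref{lem:sharingeigenvaluesLS} but only three nontrivial idempotents are available. For $d\ge 4$, suppose $A_{i_0}$ is strictly of Latin square type and $A_{i_1}$ strictly of negative Latin square type, and partition $[d]$ into the four nonempty blocks $P_{bb},P_{ba},P_{ab},P_{aa}$ according to which of $a_{i_0},b_{i_0}$ and which of $a_{i_1},b_{i_1}$ an idempotent attains. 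First, a conference graph $A_{i_2}$ with $i_2<d$ cannot occur: for $j\in P_{ba}$, the conditions $B_{i_2}\cap B_{i_0}=\varnothing$ (both of Latin square type) and $B_{i_2}^{\,c}\cap B_{i_1}^{\,c}=\varnothing$ (both of negative Latin square type) force $j\notin B_{i_2}$ and $j\in B_{i_2}$ at once. Hence every remaining $A_i$ ($i<d$, $i\ne i_0,i_1$) is strictly of one of the two types, and so $B_i\subseteq P_{ab}\cup P_{aa}$ when $i$ is strictly of Latin square type, $B_i\supseteq P_{ba}\cup P_{aa}$ when $i$ is strictly of negative Latin square type, while $B_i$ (resp.\ $B_i^{\,c}$) meets \emph{each} of the two relevant blocks because the pair of $i$ with $i_1$ (resp.\ with $i_0$) crosses.

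With these structural facts I would then choose indices $j_1\in P_{bb}$, $j_2\in P_{ba}$, $j_3\in P_{ab}$, $j_4\in P_{aa}$ — automatically distinct — such that for \emph{every} $i<d$ the parity condition $[j_1\in B_i]+[j_4\in B_i]=[j_2\in B_i]+[j_3\in B_i]$ holds. This is arranged by pinning $j_3,j_4$ inside a single Latin square block $B_{i^{*}}$ (for some $i^{*}\ne i_0$, if one exists) and pinning $j_1,j_3$ inside a single co-block $B_{i^{**}}^{\,c}$ (for some $i^{**}\ne i_1$, if one exists) — the joint requirement on $j_3$ being feasible since $B_{i^{*}}\cap B_{i^{**}}^{\,c}\ne\varnothing$ as $i^{*},i^{**}$ cross — while pairwise disjointness (resp.\ pairwise union $[d]$) disposes of all the other columns; the degenerate sub‑cases with at most one strictly‑Latin‑square or at most one strictly‑negative‑Latin‑square index are vacuous or reduce to $d=3$. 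Because $b_i-a_i=n$ for every $i<d$ and the last column too is built from the $\mathbf 1_{B_i}$, the parity condition says precisely that $(\text{row }j_1)-(\text{row }j_2)-(\text{row }j_3)+(\text{row }j_4)$ of the principal part of $P$ vanishes in every column. This nontrivial dependence among four rows contradicts the invertibility established in the first paragraph, eliminating the crossing case and completing the proof. The one place that needs genuine care is the selection of $j_1,j_2,j_3,j_4$; everything else is bookkeeping with the column formula and Lemma~\ref{lem:sharingeigenvaluesLS}.
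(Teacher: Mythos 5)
Your proposal is correct, and its core is genuinely different from the paper's. Both arguments share the same frame: if no relation of strictly Latin square type and no relation of strictly negative Latin square type coexist among the typed relations, one finishes via Lemma~\ref{lem:union} and \cite{IMY}; and in the mixed case both exclude conference graphs by the same Lemma~\ref{lem:sharingeigenvaluesLS} argument. From there the paper counts idempotents: every (strict Latin, strict negative Latin) pair of relations needs its own idempotent (with extra idempotents in the case $d_1=1$), giving $d\ge d_1d_2+1$, hence $d_3\ge(d_1-1)(d_2-1)$, which leaves only the extremal case $d_1=d_2=2$, $d_3=1$; that last case is eliminated by writing down the $5\times 5$ principal part explicitly and showing $P$ would be singular. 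You instead exploit that every (negative) Latin square type relation on $v=n^2$ vertices has restricted eigenvalue gap exactly $n$, encode column $i$ of the principal part as $a_i\mathbf 1+n\mathbf 1_{B_i}$ (and the last column via the row sums $-1$), translate Lemma~\ref{lem:sharingeigenvaluesLS} into disjointness/covering/crossing conditions on the sets $B_i$, and then select one idempotent index in each of the four blocks cut out by a crossing pair so that the parity condition makes $(\mathrm{row}\,j_1)-(\mathrm{row}\,j_2)-(\mathrm{row}\,j_3)+(\mathrm{row}\,j_4)$ vanish in every column, contradicting the nonsingularity of the principal part (your invertibility argument, in the spirit of Lemma~\ref{lem:rowscolsP}, is sound). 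I checked the selection in all sub-cases: the conditions are automatic for $i_0,i_1$, the pinning inside $B_{i^{*}}$ and $B_{i^{**}}^{\,c}$ is feasible because the relevant pairs cross (in particular $B_{i^{*}}\cap B_{i^{**}}^{\,c}\subseteq P_{ab}$ is nonempty), pairwise disjointness of Latin blocks and pairwise covering by negative Latin blocks handle the remaining columns, and the sub-cases with only one strict index of a type are covered since $|L|+|N|=d-1\ge 3$ when $d\ge4$. So your route replaces the paper's counting plus separate extremal case by a single uniform rank argument; it is essentially a generalization of the singular-$P$ trick the paper uses only at the very end, at the cost of a somewhat fiddlier (but valid) choice of the four idempotents.
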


\begin{proof} We note that if all but possibly one relation is of the same type, then it follows from Lemma~\ref{lem:union} that the remaining relation, which is the complement of the union of all others must also be of the same type, and hence the scheme is amorphic \cite{IMY}. We may thus assume that there is at least one relation of strictly Latin square type, say $A_1$, and at least one relation of strictly negative Latin square type, say $A_2$.

Suppose now that one of the other relations is a conference graph. By Lemma \ref{lem:sharingeigenvaluesLS} (which we will use over and over again without mentioning), there is an idempotent $E$ for which $A_1$ has a positive eigenvalue and $A_2$ a negative eigenvalue. As the conference graph is of the same type as $A_1$, its eigenvalue on $E$ must be negative. But it is also of the same type as $A_2$, so its eigenvalue on $E$ must be positive. Thus we have a contradiction, and hence none of the relations is a conference graph. 

Suppose then that there are $d_1$ relations of strict Latin square type, $d_2$ relations of strict negative Latin square type, and $d_3$ remaining ones (hence $d_3 \le 1$).

Note that because a strict Latin square type relation and a strict negative Latin square type relation share eigenvectors for all four combinations of restricted eigenvalues, it is clear that $d \ge 4$.

First, suppose now that $d_1=1$, and consider the corresponding relation $A_1$ of strictly Latin square type. For each relation $A_i$ of strictly negative Latin square type, there is at least one idempotent which has a negative eigenvalue for $A_i$ and a positive eigenvalue for $A_1$ and also at least one with a negative eigenvalue for $A_i$ and a negative for $A_1$. As different $A_i$ do not share idempotents for negative eigenvalues, we thus obtain at least $2d_2$ idempotents.
Furthermore, there must be at least one idempotent that has a positive eigenvalue for all of the strictly negative Latin square type relations; this follows because fusing all remaining ($d_1+d_3$) relations gives an amorphic scheme, in which there is such an idempotent. Thus it follows that $d \ge 2d_2+1$, and hence $d_3 \ge d_2 \ge 2$, which is a contradiction.

Thus, we may assume that $d_1 \ge 2$, and similarly, that $d_2 \ge 2$. With the same arguments as above, it follows that there is at least one idempotent for each pair of relations, with one of Latin square type, and one of negative Latin square type, where the eigenvalue for the first is positive, and for the second is negative, and that these idempotents must be different for each pair. Thus, in this case, $d \ge d_1d_2+1$, which implies that $d_3 \geq (d_1-1)(d_2-1)$. We therefore have a contradiction, unless possibly $d_1=d_2=2$, and $d_3=1$.

In the latter case, considering the above arguments, it follows that the principal part of the eigenmatrix $P$ is of the form
$${\small \begin{bmatrix}
    b_1 & a_2 & b_3 & a_4 & a_5\\
    b_1 & a_2 & a_3 & b_4 & b_5\\
    a_1 & b_2 & b_3 & a_4 & c_5\\
    a_1 & b_2 & a_3 & b_4 & d_5\\
    a_1 & a_2 & a_3 & a_4 & e_5
\end{bmatrix}},$$
where the first two relations are of Latin square type, and the next two are of negative Latin square type. This implies that $a_1+b_2=b_1+a_2$ and $a_3+b_4=b_3+a_4$, and now it easily follows that $P$ is singular, which is our final contradiction.   
\end{proof}

\section{Fusion and contraction in the fusing-relations graph}\label{sec:structurefpg}

Let $\mathcal{R}$ be a $d$-class association scheme and let $\Gamma = \AGraph(\mathcal{R})$ be its fusing-relations graph, that is, the graph with vertex set $\{1,2,\ldots,d\}$ such that $i$ and $j$ are joined by an edge if the pair $\{A_i,A_j\}$ fuses.  Thus, if $\{i,j\}$ is an edge in $\Gamma$, then we can obtain an association scheme $\mathcal{R}'$ by fusing the relations $A_i$ and $A_j$. We denote by $\fuse_{ij}(\mathcal{R})$ the fusing-relations graph of $\mathcal{R}'$. 
For ease of notation, we let the vertex set of $\fuse_{ij}(\mathcal{R})$ be $[d]\setminus \{i,j\} \cup \{ij\}$ such that the vertex $ij$ corresponds to the relation $A_{\{i,j\}}$. 

Let $\Gamma$ be a graph and let $\{i,j\}$ be an edge in $\Gamma$. The \emph{$ij$-contraction} of $\Gamma$, denoted by $\Gamma/ij$, is the simple graph obtained from $\Gamma$ by contracting the edge $\{i,j\}$, i.e., we replace vertices $i$ and $j$ by one new vertex $ij$, and for each $h \neq i,j$, if at least one of $\{i,h\}$ and $\{j,h\}$ is an edge in $\Gamma$, we replace these by one edge $\{ij,h\}$. 

\begin{lemma}\label{lem:hemlock}
    Let $\mathcal{R}$ be a $d$-class association scheme with relations $A_1,\ldots,A_d$ and let $\Gamma = \AGraph(\mathcal{R})$ be its fusing-relations graph. Assume that $\{i,j\}$ is an edge in $\Gamma$. Then $\fuse_{ij}(\mathcal{R})$ contains $\Gamma/ij$ as a subgraph. 
\end{lemma}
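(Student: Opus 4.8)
The plan is to show that every edge of $\Gamma/ij$ is an edge of $\fuse_{ij}(\mathcal{R})$. The edges of $\Gamma/ij$ come in two kinds: edges $\{h,\ell\}$ with $h,\ell \neq i,j$ that were already edges of $\Gamma$, and edges $\{ij,h\}$ arising from an edge $\{i,h\}$ or $\{j,h\}$ of $\Gamma$. So there are two claims to verify: first, that if $\{A_h,A_\ell\}$ fuses in $\mathcal{R}$ (with $h,\ell\notin\{i,j\}$), then $\{A_h,A_\ell\}$ still fuses in $\mathcal{R}' = \fuse_{ij}(\mathcal{R})$; and second, that if $\{A_i,A_h\}$ fuses in $\mathcal{R}$ (the case $\{A_j,A_h\}$ being symmetric), then $\{A_{\{i,j\}},A_h\}$ fuses in $\mathcal{R}'$.

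First I would set up the bookkeeping via the \BM. Fusing $A_i$ and $A_j$ corresponds to a partition $\pi$ with nontrivial part $\{i,j\}$, and there is a dual partition $\rho$ with a unique nontrivial part, say $\{k,\ell_0\}$, so that $\{i,j\}\correspond\{k,\ell_0\}$; the eigenmatrix $P'$ of $\mathcal{R}'$ is obtained from $P$ by deleting the redundant row (index $\ell_0$) and column (index $j$) after collapsing, with the surviving entries being the common block row sums. The key observation is that a partition $\sigma$ of the index set of $\mathcal{R}'$ gives a fusion of $\mathcal{R}'$ precisely when its ``pull-back'' partition $\tilde\sigma$ of $\{0,\dots,d\}$ (replacing the symbol $ij$ by the pair $\{i,j\}$) gives a fusion of $\mathcal{R}$: this is because a block of $P'$ on the pulled-back rows/columns has constant row sums if and only if the corresponding block of $P$ does, using that $P'$-entries are $P$-block row sums and that $\rho$ refines any coarser dual partition compatibly. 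For the first claim, the pull-back of $\{h,\ell\}$ is just $\{h,\ell\}$ itself, which already gives a fusion of $\mathcal{R}$ by hypothesis, so the same holds in $\mathcal{R}'$. For the second claim, the pull-back of $\{ij,h\}$ is $\{i,j,h\}$; I would argue that since $\{i,j\}$ fuses and $\{i,h\}$ fuses, the triple $\{i,j,h\}$ fuses — this is where I would lean on the transitivity-type structure of the \BM{} (two overlapping fusing pairs sharing a common index force the union to fuse, because the dual partitions can be merged).

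The main obstacle will be making that last step fully rigorous: showing that fusing of $\{i,j\}$ and of $\{i,h\}$ implies fusing of $\{i,j,h\}$, i.e., that the dual partitions of the two pairwise fusions are mutually compatible and can be combined into a single valid dual partition for the triple. One has $\{i,j\}\correspond\{k_1,\ell_1\}$ and $\{i,h\}\correspond\{k_2,\ell_2\}$; I would need to check that the coarsest common refinement on the relation side, namely $\{i,j,h\}$, corresponds on the idempotent side to merging the appropriate dual parts, and that each resulting block of $P$ indeed has constant row sums. A clean way to see this is to note that the row sums are forced: the block of $P$ on dual-rows $\{k_1,\ell_1\}$ and relation-columns $\{i,j,h\}$ has constant row sums because it does on $\{i,j\}$ (by the first fusion) and the entry in column $h$ is constant down $\{k_1,\ell_1,\dots\}$ by whatever Lemma~\ref{lem:rowscolsP}-type constraints apply — and then iterate/merge with the second fusion's data. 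Once the triple $\{i,j,h\}$ is known to fuse in $\mathcal{R}$, translating back through the pull-back correspondence gives that $\{A_{\{i,j\}},A_h\}$ fuses in $\mathcal{R}'$, completing the proof.
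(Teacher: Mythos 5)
Your plan follows essentially the same route as the paper --- both amount to verifying the \BM{} for the coarser partition obtained after fusing $\{i,j\}$ --- but the decisive step is exactly the one you leave open, so as it stands there is a gap. The claim that the fusing pairs $\{i,j\}$ and $\{i,h\}$ force the triple $\{i,j,h\}$ to fuse is, in your pull-back formulation, the whole content of the lemma, and your sketch of why the relevant blocks have constant row sums leans on the wrong fact: the constancy of column $h$ down the rows $k_1,\ell_1$ (where $\{i,j\}\correspond\{k_1,\ell_1\}$) does not come from ``Lemma~\ref{lem:rowscolsP}-type constraints''; it is part of the Bannai--Muzychuk data of the first fusion, namely $P_{k_1 m}=P_{\ell_1 m}$ for all $m\neq i,j$ (in particular $m=h$), and likewise $P_{k_2 m}=P_{\ell_2 m}$ for all $m\neq i,h$ (in particular $m=j$). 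With these equalities plus the two row-sum identities $P_{k_1 i}+P_{k_1 j}=P_{\ell_1 i}+P_{\ell_1 j}$ and $P_{k_2 i}+P_{k_2 h}=P_{\ell_2 i}+P_{\ell_2 h}$, every block of the candidate merged partition is constant-row-sum; this is precisely the computation the paper carries out, phrased directly with the eigenmatrix $P'$ of $\fuse_{ij}(\mathcal{R})$ instead of a pull-back.

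Two further points need repair. First, you never exclude $\{k_1,\ell_1\}=\{k_2,\ell_2\}$: if the two dual pairs coincided, your merged dual partition would have one class fewer than the relation partition with parts $\{0\},\{i,j,h\}$ and singletons, so it could not satisfy the \BM{}. The paper rules this out via the injectivity of the pair correspondence (Lemma~\ref{lem:pairs}); alternatively Lemma~\ref{lem:rowscolsP} with $t=2$ gives a contradiction. Second, your treatment of an edge $\{h,\ell\}$ of $\Gamma/ij$ with $h,\ell\neq ij$ is too quick: its pull-back to $\mathcal{R}$ is not $\{h,\ell\}$ alone but the partition with the two nontrivial parts $\{i,j\}$ and $\{h,\ell\}$, since the class $A_i+A_j$ of the fusion scheme stays merged. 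Hence this case is not immediate from the hypothesis that $\{h,\ell\}$ fuses in $\mathcal{R}$; it requires the same merging argument as above (now with disjoint pairs, and again with the two dual pairs shown to be distinct), which is what the paper means by ``similarly (and more easily)''.
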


\begin{proof}
    Let $P$ and $P'$ be the eigenmatrices of $\mathcal{R}$ and $\mathcal{R}'$, respectively. 
    Let $i'$ and $j'$ be such that $\{i,j\} \correspond \{i',j'\}$ in $\mathcal{R}$. 
    Let $\{p,q\}$ be an edge in $\Gamma/ij$. We aim to prove that $\{p,q\}$ is also an edge in $\fuse_{ij}(\mathcal{R})$. 

    If $q=ij$, then we may assume without loss of generality that $\{p,i\}$ is an edge in $\Gamma$. Let $p'$ and $q'$ be such that $\{p,i\} \correspond \{p',q'\}$ in $\mathcal{R}$. Thus, $P_{p'p}+P_{p'i}=P_{q'p}+P_{q'i}$ and $P_{p'h}=P_{q'h}$ for $h \neq p,i$. We note that $\{p',q'\}\neq \{i',j'\}$ by the one-to-one correspondence of pairs; see Lemma \ref{lem:pairs}. This means that $p'$ and $q'$ correspond to different idempotents in $\mathcal{R'}$. In case $p'$ or $q'$ equals $i'$ or $j'$, then we let it correspond to $i'j'$ in $\mathcal{R'}$ in below equations. For example, when $p'=i'$ and $h \neq ij$, then $P'_{p'h} = P'_{(i'j')h}=P_{i'h}=P_{p'h}$.
    Then it follows (for any $p'$) that $$P'_{p'p}+P'_{p'(ij)}=P_{p'p}+P_{p'i}+P_{p'j}=P_{q'p}+P_{q'i}+P_{q'j}=P'_{q'p}+P'_{q'(ij)}$$ and $P'_{p'h}=P_{p'h}=P_{q'h}=P'_{q'h}$ for $h \neq p,ij$, which shows that $\{p,ij\} \correspond \{p',q'\}$ in $\mathcal{R'}$, and hence that 
     $\{p,ij\}$ is an edge in $\fuse_{ij}(\mathcal{R})$.

     Similarly (and more easily) it can be shown that $\{p,q\}$ is an edge in $\fuse_{ij}(\mathcal{R})$ if $p,q \neq ij$.
\end{proof}

The following example shows that it can happen that $\Gamma/ij$ is a proper subgraph of $\fuse_{ij}(\mathcal{R})$. 

\begin{exam}
    Let $\mathcal{R}$ be a $3$-class association scheme with  relations $A_1, A_2$ and $A_3$, such that $A_1$ is strongly regular, while $A_2$ and $A_3$ are not (such schemes exist). Clearly, the two relations $A_2$ and $A_3$ fuse and the fusing-relations graph $\Gamma$ of $\mathcal{R}$ is $K_1 \sqcup K_2$. If we fuse the relations $A_2$ and $A_3$, we obtain a $2$-class association scheme whose fusing-relations graph $\fuse_{2,3}(\mathcal{R})$ is $K_2$, because both relations are strongly regular. However, the contraction $\Gamma/23$ is $K_1 \sqcup K_1$. 
\end{exam}

\section{Not a path}\label{sec:notapath}

We are now ready to move to our main result, in which we state that if the fusing-relations graph is connected but not a path, then the corresponding association scheme is amorphic. Recall that in Example \ref{ex:path}, we constructed examples of association schemes for which the fusing-relations graph is a path (and hence these are not amorphic). 

Our first step towards this result is to consider cycles, or more generally, Hamiltonian graphs.

\begin{prop}\label{thm:Hcycle}
    Let $\mathcal{R}$ be a $d$-class association scheme with $d\geq 3$ and let $\Gamma = \AGraph(\mathcal{R})$ be its fusing-relations graph. If $\Gamma$ is Hamiltonian, then $\mathcal{R}$ is amorphic.     
\end{prop}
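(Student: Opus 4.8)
The goal is to show that if the fusing-relations graph $\Gamma$ contains a Hamiltonian cycle $1-2-\cdots-d-1$ (after relabeling), then $\mathcal R$ is amorphic. The strategy is to combine the contraction lemma (Lemma~\ref{lem:hemlock}) with the ``all pairs'' theorem (Theorem~\ref{thm:pairs}) via an induction on $d$. The base cases $d=3$ (and $d=4$) are essentially trivial, since for $d=3$ a Hamiltonian cycle means all three pairs fuse, so Theorem~\ref{thm:pairs} applies directly; one should also dispose of small $d$ where ``path vs.\ cycle'' subtleties could intrude. For the inductive step, I would fix the Hamiltonian edge $\{1,2\}$ and form $\mathcal R' = \mathcal R_{\{1,2\}}$, the fusion scheme on $d-1$ classes. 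By Lemma~\ref{lem:hemlock}, $\fuse_{12}(\mathcal R)$ contains $\Gamma/12$ as a subgraph, and $\Gamma/12$ is precisely the $(d-1)$-cycle $12-3-4-\cdots-d-12$ — contracting an edge of a Hamiltonian cycle yields a Hamiltonian cycle on one fewer vertex. Hence $\fuse_{12}(\mathcal R) = \AGraph(\mathcal R')$ is Hamiltonian, and by induction $\mathcal R'$ is amorphic.

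\textbf{From amorphic quotient back to $\mathcal R$.} The remaining and main work is to upgrade ``$\mathcal R'$ amorphic'' to ``$\mathcal R$ amorphic''. Here is where I would use the structural information more carefully. Since $\mathcal R'$ is an amorphic scheme with $d-1\ge 3$ classes, by Ivanov's theorem all its relations are strongly regular of the same type; in particular the relation $A_{\{1,2\}}$ and each $A_i$, $i\ge 3$, is strongly regular of (negative) Latin square type, and the principal part of the eigenmatrix $P'$ has the canonical form of \cite[Prop.~2]{vDM2010}. Now I want to conclude that $A_1$ and $A_2$ themselves are strongly regular. The edge $\{1,2\}$ together with the rest of the cycle gives, for instance, that $\{1,2\}$ fuses and that $\{2,3\}$ fuses and $\{d,1\}$ fuses; I would feed the known eigenmatrix structure of $P'$ back through the Bannai--Muzychuk correspondence to reconstruct the rows of $P$ indexed by the idempotents inside $\rho(1)\cup\rho(2)$ (the part of $\rho$ that got fused). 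Concretely: the fusion collapsed two rows of $P$ (say rows $1'$ and $2'$) into one row of $P'$, and $A_1,A_2$ restricted to the other idempotents agree and match the canonical pattern; one then has to show the only way the $\{1,2\}$-fusion can be consistent with the adjacent fusions $\{2,3\},\ldots$ forced by the cycle is that $A_1$ and $A_2$ are each strongly regular and that $P$ itself has canonical form. An efficient route: show that all but at most one relation of $\mathcal R$ — in fact all but possibly one of $A_1,A_2$ — is strongly regular of (negative) Latin square type, then invoke Theorem~\ref{thm:LStype} to finish immediately; but since splitting one SRG of (negative) Latin square type relation into $A_1+A_2$ with both pieces unknown is the delicate point, I expect to need the cycle condition (the edges at $1$ and at $2$ other than $\{1,2\}$) to pin down $A_1$ and $A_2$ individually.

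\textbf{Main obstacle.} The crux is precisely this lifting step: knowing the quotient $\mathcal R_{\{1,2\}}$ is amorphic does not by itself force $A_1$ and $A_2$ to be ``nicely'' strongly regular — a priori the single relation $A_{\{1,2\}}$ of the amorphic quotient could split into $A_1,A_2$ in an uncontrolled way (this is exactly the phenomenon of Examples~\ref{ex:clique+vertex} and \ref{ex:general-clique+vertex}, where an amorphic scheme sits inside a non-amorphic one). What rules this out is that $1$ and $2$ each lie on \emph{another} edge of the cycle. I would use those two extra fused pairs, $\{1,d\}$ and $\{2,3\}$ (say), to derive enough linear relations among the entries of the row of $P'$ and the two rows of $P$ that split from it to force the canonical pattern. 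I anticipate this argument requires a short but careful case analysis on which idempotents of $\mathcal R$ carry which eigenvalue of $A_1$ versus $A_2$, using Lemma~\ref{lem:rowscolsP} to prevent degeneracies, and possibly handling separately the case where $\{1,2\} \correspond \{1,2\}$ with the fused idempotent pair also being split (the ``dual'' phenomenon), exactly as in the proof of Theorem~\ref{thm:pairs}. Once $P$ is shown to have canonical form, amorphy follows from \cite[Prop.~2]{vDM2010}.
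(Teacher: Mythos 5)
Your skeleton — induction on $d$, base case $d=3$ via Theorem~\ref{thm:pairs}, and Lemma~\ref{lem:hemlock} to see that contracting the Hamiltonian-cycle edge $\{1,2\}$ yields a $(d-1)$-class scheme whose fusing-relations graph is again Hamiltonian, hence amorphic by induction — is exactly the paper's. The gap is in the step you yourself flag as the crux: upgrading amorphy of the quotient $\mathcal{R}_{\{1,2\}}$ to amorphy of $\mathcal{R}$. What you offer there is a plan (reconstruct the two rows of $P$ that were collapsed, push the structure of $P'$ back through the Bannai--Muzychuk correspondence, a ``short but careful case analysis'') rather than an argument; as you note, an amorphic quotient can sit inside a non-amorphic scheme, so nothing you have written rules out a bad splitting of $A_{\{1,2\}}$ into $A_1+A_2$. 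Your ``efficient route'' stalls exactly where you admit it does: a single contraction only certifies the $d-2$ untouched relations $A_3,\dots,A_d$ as strongly regular of (negative) Latin square type, leaving \emph{two} undetermined relations, one too many for Theorem~\ref{thm:LStype}.

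The missing idea is that no lifting from a single quotient is needed: apply the induction hypothesis to \emph{other edges of the same Hamiltonian cycle}. Contracting $\{2,3\}$ gives another $(d-1)$-class scheme that is Hamiltonian by Lemma~\ref{lem:hemlock}, hence amorphic, and by Ivanov's theorem all of its relations — among them the untouched relations $A_1,A_4,\dots,A_d$, which are literally relations of $\mathcal{R}$ — are strongly regular of Latin square or negative Latin square type; this is an intrinsic property of each graph, so it is information about $\mathcal{R}$ itself, not about the quotient. Combining the contractions at $\{1,2\}$ and $\{2,3\}$ (and, if you wish, $\{3,4\}$ to cover $A_2$ as well), all relations of $\mathcal{R}$, with at most one exception, are of (negative) Latin square type, and Theorem~\ref{thm:LStype} finishes at once (using that theorem rather than only the Ito--Munemasa--Yamada result conveniently sidesteps any worry about whether the types certified by different contractions agree). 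This is the paper's proof: varying the contracted edge around the cycle replaces the delicate eigenmatrix case analysis you anticipated, which you never actually carried out and which constitutes the genuine gap in your proposal.
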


\begin{proof}
    We apply induction on $d$. For $d=3$, the result is clear (Theorem~\ref{thm:pairs}).     
    Now let $d\geq 4$ and assume that the result is valid for $(d-1)$-class schemes. 
    Consider a Hamiltonian cycle of $\Gamma$ and two consecutive vertices in this cycle. By Lemma~\ref{lem:hemlock}, we can fuse these two vertices and obtain a 
    $(d-1)$-class scheme that is Hamiltonian, and hence it is amorphic. Thus, its relations are either all of Latin square type or all of negative Latin square type. Among these relations are $d-2$ relations of $\mathcal{R}$. Since we can do this for any two consecutive vertices of the $d$-cycle, it follows that all relations are of the same type, and hence $\mathcal{R}$ is amorphic.      
\end{proof}

Next, we consider $K_{1,3}$.

\begin{lemma}\label{lem:K13}
    Let $\mathcal{R}$ be a $4$-class association scheme with fusing-relations graph $\Gamma = \AGraph(\mathcal{R})$ containing $K_{1,3}$. Then $\mathcal{R}$ is amorphic.
\end{lemma}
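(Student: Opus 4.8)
We have a $4$-class scheme $\mathcal{R}$ whose fusing-relations graph $\Gamma$ contains a copy of $K_{1,3}$, i.e., there is a vertex, say $1$, adjacent to three other vertices, say $2$, $3$, $4$. Thus each of the pairs $\{A_1,A_2\}$, $\{A_1,A_3\}$, $\{A_1,A_4\}$ fuses. The plan is to pin down the structure of the principal part of the eigenmatrix $P$ forced by these three fusions and then invoke Theorem~\ref{thm:LStype} to conclude amorphy, rather than trying to verify the canonical form directly.

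**The approach.** First I would analyze a single fusion, say $\{A_1,A_2\}$. By the Bannai-Muzychuk criterion there is a corresponding pair of idempotents, and by Lemma~\ref{lem:pairs} this correspondence is one-to-one. As in the proof of Theorem~\ref{thm:pairs}, the fusion $\{1,2\}\correspond\{i',j'\}$ forces two rows of the principal part of $P$ (rows $i'$ and $j'$) to agree in all columns except columns $1$ and $2$, where they differ, with the row sums over $\{1,2\}$ being equal. Since $P$ is nonsingular, those two rows differ in exactly those two positions. Doing this for each of the three edges $\{1,2\}$, $\{1,3\}$, $\{1,4\}$ gives three such "two-position-difference" pairs of rows, all of whose difference-positions involve column $1$. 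The main combinatorial work is to show these constraints, together with Lemma~\ref{lem:rowscolsP} applied to triples and the full quadruple of rows, force each relation $A_i$ to be strongly regular and, more importantly, force the eigenvalue structure: that $A_1$ takes its "special" eigenvalue on only one restricted idempotent, and likewise (after the bookkeeping) for $A_2,A_3,A_4$. I expect to be able to argue, much as in the endgame of Theorem~\ref{thm:pairs}, that after a suitable relabeling the principal part of $P$ has a canonical form, or at least that all four relations $A_1,\dots,A_4$ are strongly regular with the sharing-of-eigenvectors pattern that Lemma~\ref{lem:sharingeigenvaluesLS} governs.

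**Finishing via Theorem~\ref{thm:LStype}.** Once I know all relations are strongly regular, the cleanest route to the conclusion is: show that at most one of $A_1,\dots,A_4$ fails to be of Latin square type or negative Latin square type, and then apply Theorem~\ref{thm:LStype}. To do this I would use Lemma~\ref{lem:railway}: for each fusing pair $\{A_1,A_i\}$, the eigenvalues of $A_1+A_i$ are sums $a_1+a_i$, $a_1+b_i$, $b_1+a_i$, $b_1+b_i$, but the Bannai-Muzychuk criterion forces $A_1+A_i$ to have at most $d-1=3$ distinct restricted eigenvalues (it is one relation in a $3$-class scheme, so at most $3$ restricted eigenvalues; in fact the fusion scheme having $3$ classes means the merged relation could a priori have up to $3$), which collapses one of the four sums — typically $a_1+b_i=b_1+a_i$, i.e., the "crossed" eigenvalues coincide. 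Having $b_1-a_1=b_i-a_i$ for $i=2,3,4$, combined with the valency relation $k_1=-a_1(n-1)$-type identity that one extracts, should let me apply Lemma~\ref{lem:SRGkandaNLS} to conclude that $A_1$ (and hence each $A_i$) is of (negative) Latin square type on a common square number of vertices. The delicate point, and the one I expect to be the main obstacle, is the possibility that $A_1+A_i$ genuinely has $3$ distinct restricted eigenvalues for some $i$ (so the collapse does not happen in the expected way), or that a conference graph sneaks in; handling these degenerate branches — ruling them out by a rank argument on $P$ as in the last display of Theorem~\ref{thm:LStype}, or by a parity/multiplicity count via the explicit $m_2,m_3>0$ in Lemma~\ref{lem:railway} — is where the real case analysis lives. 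But in every branch the outcome should be that at most one relation is exceptional, so Theorem~\ref{thm:LStype} applies and $\mathcal{R}$ is amorphic.
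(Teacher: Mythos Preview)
Your high-level plan---pin down $P$ via the three fusions, then invoke Theorem~\ref{thm:LStype}---matches the paper's, but the execution has a genuine gap in the key middle step.

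The concrete problem is your proposed application of Lemma~\ref{lem:railway} to the fusing pairs $\{A_1,A_i\}$. That lemma requires \emph{both} graphs to be strongly regular, and nothing in the three fusions forces $A_1$ to be strongly regular. Indeed, after the case analysis the paper arrives (in the surviving case $\{1,2\}\correspond\{1,2\}$, $\{1,3\}\correspond\{1,3\}$, $\{1,4\}\correspond\{1,4\}$) at a principal part
\[
\begin{bmatrix}
 b_1 & a_2 & a_3 & a_4\\
 a_1 & b_2 & a_3 & a_4\\
 \ast & a_2 & b_3 & a_4\\
 \ast & a_2 & a_3 & b_4
\end{bmatrix},
\]
where the starred entries in column~$1$ are unconstrained; so $A_1$ may have up to four distinct restricted eigenvalues. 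Your collapse $a_1+b_i=b_1+a_i$ therefore cannot even be formulated, and the identity $b_1-a_1=b_i-a_i$ you want to feed into Lemma~\ref{lem:SRGkandaNLS} is unavailable.

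The paper circumvents this by applying Lemma~\ref{lem:railway} not to the fusing pairs $\{A_1,A_i\}$ but to the \emph{non-fusing} pairs $\{A_i,A_j\}$ with $i,j\in\{2,3,4\}$, which \emph{are} both strongly regular by the displayed form of $P$. The key observation is then not an eigenvalue coincidence but a multiplicity vanishing: since no row of the principal part carries both $b_i$ and $b_j$, the restricted multiplicity of $b_i+b_j$ in Lemma~\ref{lem:railway} is zero, giving $va_ia_j=(k_i-a_i)(k_j-a_j)$ for all three pairs. These three equations force $(k_i-a_i)/a_i=\pm\sqrt{v}$, so $v$ is a square and Lemma~\ref{lem:SRGkandaNLS} makes $A_2,A_3,A_4$ of (negative) Latin square type; Theorem~\ref{thm:LStype} finishes with one relation ($A_1$) possibly exceptional. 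You also underestimate the preliminary case analysis: correspondences such as $\{1,4\}\correspond\{3,4\}$ must be ruled out first, which the paper does via a sign-parity argument using the $m_2,m_3>0$ part of Lemma~\ref{lem:railway}; your sketch alludes to ``degenerate branches'' but does not identify this obstruction or its resolution.
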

    
\begin{proof}
    Assume that the edge set of $\Gamma$ contains $12,13,$ and $14$.
    Without loss of generality we may assume that $\{1,2\} \correspond \{1,2\}$
    and $\{1,3\} \correspond \{1,3\}$, and hence the eigenmatrix $P$ has the form 
\[P={\small \begin{bmatrix}
    1 & k_1 & k_2 & k_3 & k_4 \\
    1 & b_1 & a_2 & a_3 & a_4 \\ 
    1 & a_1 & b_2 & a_3 & a_4 \\ 
    1 & \ast & a_2 & b_3 & a_4 \\
    1 & \ast & \ast & \ast & b_4 \\
\end{bmatrix}}. \]

It is clear that $\{1,4\} \correspond \{2,3\}$ is not possible. 
Suppose that $\{1,4\} \correspond \{3,4\}$
 (or similarly $\{1,4\} \correspond \{2,4\}$). Then 
\[P={\small \begin{bmatrix}
    1 & k_1 & k_2 & k_3 & k_4 \\
    1 & b_1 & a_2 & a_3 & a_4 \\ 
    1 & a_1 & b_2 & a_3 & a_4 \\ 
    1 & \ast & a_2 & b_3 & a_4 \\
    1 & \ast & a_2 & b_3 & b_4 \\
\end{bmatrix}}. \]
For two real numbers $a$ and $b$, by $a \diamond b$ we denote that one of $a$ and $b$ is nonnegative and the other is negative. 
This occurs precisely when $a$ and $b$ are the restricted eigenvalues of a strongly regular graph, hence it follows that $a_i \diamond b_i$ for $i = 2,3,4$.
As mentioned before, by Lemma~\ref{lem:railway}, an association scheme having two strongly regular relations has idempotents on which one of the strongly regular relations has a nonnegative restricted eigenvalue, whereas the other strongly regular relation has a negative restricted eigenvalue.
Therefore, $b_2\diamond a_3$, $b_2\diamond a_4$, and $b_4\diamond b_3$, which leads to a contradiction. 

Thus, $\{1,4\} \correspond \{1,4\}$ and 
\[P={\small \begin{bmatrix}
    1 & k_1 & k_2 & k_3 & k_4 \\
    1 & b_1 & a_2 & a_3 & a_4 \\ 
    1 & a_1 & b_2 & a_3 & a_4 \\ 
    1 & \ast & a_2 & b_3 & a_4 \\
    1 & \ast & a_2 & a_3 & b_4 \\
\end{bmatrix}}. \]
For each pair $i,j \in \{2,3,4\}$, the strongly regular graphs $A_i$ and $A_j$ do not share eigenvectors for $b_i$ and $b_j$ and hence the restricted multiplicity of the restricted eigenvalue $b_i + b_j$ of $A_i+A_j$ must be zero. By Lemma~\ref{lem:railway}, it now follows that $va_2a_3=(k_2-a_2)(k_3-a_3)$,
$va_2a_4=(k_2-a_2)(k_4-a_4)$, and $va_3a_4=(k_3-a_3)(k_4-a_4)$.

Note first that this system of three equations does not allow for any of the $a_i$ to be zero. Thus, it follows that 
$$\frac{k_2-a_2}{a_2}=\frac{k_3-a_3}{a_3} =\frac{k_4-a_4}{a_4}= \pm \sqrt{v}. $$

Observe now that $\sqrt{v}$ is a rational number, and so $v$ is a square. By Lemma \ref{lem:SRGkandaNLS}, it now follows that $A_2,A_3$, and $A_4$ are strongly regular of Latin square type or negative Latin square type, and hence by Theorem \ref{thm:LStype}, $\mathcal{R}$ is amorphic.
\end{proof}

By combining the above, we can now prove our main result.

\begin{thm}\label{thm:mainresult}
    Let $\mathcal{R}$ be a $d$-class association scheme with $d\geq 3$ and let $\Gamma = \AGraph(\mathcal{R})$ be its fusing-relations graph. If $\Gamma$ is a connected graph that is not a path, then $\mathcal{R}$ is amorphic. 
\end{thm}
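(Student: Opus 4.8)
The plan is to argue by induction on $d$, using Proposition~\ref{thm:Hcycle}, Lemma~\ref{lem:K13}, Lemma~\ref{lem:hemlock} and Theorem~\ref{thm:LStype}. For $d=3$, the only connected non-path graph is $K_3$, which is Hamiltonian, so $\mathcal{R}$ is amorphic by Proposition~\ref{thm:Hcycle}. For $d=4$, the connected non-path graphs are $K_{1,3}$, the paw (a triangle with a pendant edge), $C_4$, the diamond $K_4-e$, and $K_4$; the last three are Hamiltonian, and the first two contain $K_{1,3}$, so the conclusion follows from Proposition~\ref{thm:Hcycle} and Lemma~\ref{lem:K13} respectively.

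Now suppose $d\ge5$ and that the theorem holds for $(d-1)$-class schemes. If $\Gamma$ is Hamiltonian, we are done by Proposition~\ref{thm:Hcycle}. Otherwise, since a connected graph of maximum degree at most $2$ is a path or a cycle (and a cycle is Hamiltonian), $\Gamma$ has a vertex of degree at least $3$. I would then prove the following purely graph-theoretic statement: \emph{a connected graph $\Gamma$ on $d\ge5$ vertices that has a vertex of degree at least $3$ has two distinct edges $e_1,e_2$ such that neither $\Gamma/e_1$ nor $\Gamma/e_2$ is a path.} Granting this, each $\Gamma/e_t$ is connected (being a contraction of a connected graph) and not a path, so it contains a cycle or a vertex of degree at least $3$; by Lemma~\ref{lem:hemlock}, $\fuse_{e_t}(\mathcal{R})$ contains $\Gamma/e_t$ as a spanning subgraph, hence $\fuse_{e_t}(\mathcal{R})$ is again connected and not a path. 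As $\fuse_{e_t}(\mathcal{R})$ is a $(d-1)$-class scheme with $d-1\ge4$, the induction hypothesis makes it amorphic, and therefore each of its relations is strongly regular, of Latin square type or of negative Latin square type; in particular every relation $A_h$ of $\mathcal{R}$ with $h\notin e_t$ is strongly regular of (negative) Latin square type. Since $e_1\ne e_2$, we have $|e_1\cap e_2|\le1$, so at most one relation of $\mathcal{R}$ fails to be strongly regular of (negative) Latin square type, and Theorem~\ref{thm:LStype} gives that $\mathcal{R}$ is amorphic.

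What remains, and what I expect to be the main obstacle, is the graph-theoretic statement, which I would establish by a short case analysis. If $\Gamma$ has a cycle of length at least $4$, then contracting any of its (at least four) edges leaves a cycle of length at least $3$, so every such edge works. If $\Gamma$ is a tree, fix a vertex $v$ of degree at least $3$; contracting any edge not incident with $v$ leaves $v$ of degree at least $3$, and for $d\ge6$ there are at least two such edges, while for $d=5$ only the star $K_{1,4}$ and the chair need be checked directly. Finally, if $\Gamma$ has a cycle but every cycle is a triangle, fix a triangle $T$; since $d>3$ and $\Gamma$ is connected, some edge leaves $T$, and contracting it preserves $T$, while a second good edge is found either among the remaining edges leaving $T$, or --- in the case where $T$ is attached to the rest of $\Gamma$ by a single edge $\{x,w\}$, which forces the other two vertices of $T$ to have degree $2$ and (as $d\ge5$) $w$ to have degree at least $2$ --- among the edges at $w$ other than $\{x,w\}$, whose contraction keeps $x$ of degree $3$. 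The statement genuinely fails for $d=4$, since every edge of $K_{1,3}$ contracts to $P_3$; this is exactly why $d=4$ is treated as a base case, and dealing with that boundary, together with the two exceptional trees on $5$ vertices, is the one place requiring care.
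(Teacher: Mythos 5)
Your strategy is essentially the paper's: dispose of Hamiltonian $\Gamma$ by Proposition~\ref{thm:Hcycle}, use Lemma~\ref{lem:K13} at $d=4$, and for $d\ge 5$ find two distinct edges whose contractions retain the hypothesis, so that Lemma~\ref{lem:hemlock}, the induction hypothesis, and Ivanov's theorem (relations of an amorphic scheme with at least three classes are of (negative) Latin square type) leave at most one relation of $\mathcal{R}$ possibly not of (negative) Latin square type, after which Theorem~\ref{thm:LStype} finishes. The only packaging differences are that the paper carries the invariant ``$\Gamma$ contains $K_{1,3}$'' through the induction and insists the two edges share a vertex, while you keep ``connected and not a path'' and allow disjoint edges; your observation that $|e_1\cap e_2|\le 1$ still leaves at most one uncovered relation makes that variant work, and your supergraph argument (a spanning supergraph of a connected non-path graph is connected and not a path) correctly transfers the invariant from $\Gamma/e_t$ to $\fuse_{e_t}(\mathcal{R})$.

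There is, however, one step in your auxiliary graph-theoretic claim that fails as written: in the tree case you assert that for $d\ge 6$ a tree with a vertex $v$ of degree at least $3$ has at least two edges not incident with $v$. This is false for the star $K_{1,d-1}$ (no such edge) and for a tree in which $\deg(v)=d-2$, i.e., a star with one subdivided edge (exactly one such edge). The claim itself remains true, and the repair is short: for the star, contracting any edge yields $K_{1,d-2}$ with $d-2\ge 3$, which is not a path, so any two edges serve; if $\deg(v)=d-2\ge 4$, take the unique edge $\{x,y\}$ avoiding $v$ (its contraction keeps $\deg(v)\ge 3$) together with a pendant edge at $v$, whose contraction leaves the merged vertex with degree $d-3\ge 3$. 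These are exactly the boundary shapes you checked by hand at $d=5$ ($K_{1,4}$ and the chair); they persist for every $d$ and must be handled for $d\ge 6$ as well. With that one-line patch your proof is complete and coincides in substance with the paper's.
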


\begin{proof}
    The case of $d=3$ is trivial as a result of Theorem~\ref{thm:pairs}. We assume $d\geq 4$ hereafter. 

    A connected graph that is not a path is either a cycle or it has a vertex of valency at least $3$. Because of Proposition~\ref{thm:Hcycle}, we only need to prove that $\mathcal{R}$ is amorphic if $\Gamma$ contains $K_{1,3}$. 
    To this end, we apply induction on $d$ and note that
    the case $d=4$ is Lemma~\ref{lem:K13}. 
    
    Let $d\geq 5$ and let $\Gamma$ contain $K_{1,3}$. We claim that we can always find two different edges $uv$ and $uv'$ such that both graphs $\Gamma/uv$ and $\Gamma/uv'$ are connected and contain $K_{1,3}$. Then by induction and Lemma~\ref{lem:hemlock}, we obtain that all relations but $A_u$ and $A_v$ are strongly regular of the same type, and that all relations but $A_{u}$ and $A_{v'}$ are also strongly regular of the same type. Now it follows from Theorem~\ref{thm:LStype} that $\mathcal{R}$ is amorphic. 

    It remains to prove the claim. If $\Gamma$ is a star, then any two edges show the claim. If $\Gamma$ is not a star, then besides the vertex of degree at least $3$, there is a vertex $u$ of degree at least $2$. Any two edges through $u$ then show the claim.
\end{proof}

In light of Example~\ref{ex:general-clique+vertex}, where we constructed examples of association scheme with fusing-relations graphs $K_{d-1} \sqcup K_1$, the minimal number of edges that the fusing-relations graph should have in order to be sure that the association scheme is amorphic is at least $\binom{d-1}{2}+1$. Clearly, this number is sufficient. 

\begin{cor}\label{cor:numberofedges}
Let $d\geq 3$ and $\mathcal{R}$ be a $d$-class association scheme. If $\mathcal{R}$ has more than $\binom{d-1}{2}$ fusing pairs of relations, then $\mathcal{R}$ is amorphic. 
\end{cor}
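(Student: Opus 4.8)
The plan is to derive Corollary~\ref{cor:numberofedges} directly from Theorem~\ref{thm:mainresult} together with the elementary classification of connected graphs by edge count. First I would set $\Gamma = \AGraph(\mathcal{R})$ and note that $\Gamma$ has $d$ vertices and, by hypothesis, at least $\binom{d-1}{2}+1$ edges. The first step is to observe that a graph on $d$ vertices with at least $\binom{d-1}{2}+1 \geq d$ edges (for $d \geq 3$) cannot be a forest, in particular cannot be a path; indeed a path on $d$ vertices has only $d-1$ edges, and more generally any forest on $d$ vertices has at most $d-1$ edges, so having $\binom{d-1}{2}+1$ edges rules this out as soon as $\binom{d-1}{2}+1 > d-1$, i.e. $\binom{d-1}{2} \geq d-1$, which holds for all $d \geq 3$ (with equality only at $d=3$, where $\binom{2}{2}+1 = 2 > 1$ still works). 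Hence $\Gamma$ is not a path.

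The second step is to show $\Gamma$ is connected. Suppose not; then $\Gamma$ is the disjoint union of components on $n_1, \ldots, n_r$ vertices with $r \geq 2$ and $\sum n_i = d$. The number of edges of $\Gamma$ is at most $\sum_i \binom{n_i}{2}$, and a standard convexity argument shows that, subject to $\sum n_i = d$ and $r \geq 2$, this sum is maximized when one part has $d-1$ vertices and the other a single vertex, giving at most $\binom{d-1}{2}$ edges. This contradicts the assumption that $\Gamma$ has more than $\binom{d-1}{2}$ edges. Therefore $\Gamma$ is connected. Combining the two steps, $\Gamma$ is a connected graph that is not a path, and Theorem~\ref{thm:mainresult} immediately yields that $\mathcal{R}$ is amorphic.

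The only point requiring a brief argument is the disconnected-case edge bound, i.e.\ that $\sum_{i=1}^r \binom{n_i}{2} \leq \binom{d-1}{2}$ whenever $r \geq 2$ and $\sum n_i = d$. This follows from the ``smoothing'' observation that replacing two parts of sizes $a \leq b$ by parts of sizes $a-1$ and $b+1$ (when $a \geq 2$) strictly increases $\sum \binom{n_i}{2}$, since $\binom{b+1}{2} + \binom{a-1}{2} - \binom{b}{2} - \binom{a}{2} = b - a + 1 > 0$; iterating drives all but one part down to size $1$, and the extremal configuration is $(d-1, 1, \ldots, 1)$ with edge count exactly $\binom{d-1}{2}$. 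I do not expect any genuine obstacle here; the ``hard part'' is purely bookkeeping, namely making sure the inequality $\binom{d-1}{2}+1 > d-1$ is checked at the smallest value $d=3$ so that the path exclusion is valid in the base case, and confirming that Example~\ref{ex:general-clique+vertex} shows the bound $\binom{d-1}{2}$ is best possible, so that the corollary is sharp.
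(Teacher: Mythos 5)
Your route is exactly the one the paper intends (the paper itself only says ``Clearly, this number is sufficient''): more than $\binom{d-1}{2}$ edges forces the fusing-relations graph to be connected, since a disconnected graph on $d$ vertices has at most $\binom{d-1}{2}$ edges (your smoothing argument for this is fine), and to not be a path, after which Theorem~\ref{thm:mainresult} finishes the job. For $d\geq 4$ your write-up is correct and coincides with this reasoning.

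There is, however, a genuine error in the path-exclusion step at the smallest case. You assert that $\binom{d-1}{2}\geq d-1$ for all $d\geq 3$, but at $d=3$ this reads $1\geq 2$, which is false (equality actually occurs at $d=4$, not $d=3$). Consequently, for $d=3$ the hypothesis only guarantees at least $\binom{2}{2}+1=2$ edges on $3$ vertices, and a graph with exactly $2$ edges on $3$ vertices \emph{is} the path $P_3$; your parenthetical ``$2>1$ still works'' compares against the wrong edge count of a path (it has $d-1=2$ edges, not $1$), so Theorem~\ref{thm:mainresult} cannot be invoked there. This is not just bookkeeping that can be patched by a sharper count: the $d=3$ instance of Example~\ref{ex:path} (for instance $K_2\wr K_2\wr K_2$) has fusing-relations graph $P_3$, hence exactly two fusing pairs, which is more than $\binom{2}{2}=1$, yet the scheme is not amorphic because the middle relation is not strongly regular. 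So the argument (and the bound as literally stated) genuinely breaks down at $d=3$ --- note that the sharpness Example~\ref{ex:general-clique+vertex} is also only stated for $d\geq 4$ --- and at $d=3$ one needs all three pairs to fuse, i.e.\ Theorem~\ref{thm:pairs}. Your proof should restrict the edge-count argument to $d\geq 4$ and flag the $d=3$ case separately rather than claim it ``still works.''
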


\section{Dualization}\label{sec:dualization}

So far, the results concern fusions of pairs of relations. In this section, we shall show that the same results hold for fusions of pairs of idempotents. It is clear from Lemma \ref{lem:pairs} and Theorem \ref{thm:pairs} that if all pairs of idempotents fuse, then the scheme is indeed amorphic. 

\begin{prop}\label{thm:dualpairs}
Let $\mathcal{R}$ be an association scheme. If all pairs of idempotents fuse, then $\mathcal{R}$ is amorphic.
\end{prop}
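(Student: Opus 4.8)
The plan is to deduce Proposition~\ref{thm:dualpairs} from Theorem~\ref{thm:pairs} by invoking the duality between the two eigenmatrices $P$ and $Q$. The starting observation is that the Bannai--Muzychuk criterion applies verbatim to the second eigenmatrix $Q$ as well (as noted in the excerpt, citing \cite{Muzy}): a partition of the idempotents gives rise to a fusion scheme precisely when there is a corresponding partition of the relations such that each block of $Q$ has constant row sums. Consequently, if one forms the ``transpose'' association-scheme-like data $(Q,P)$ in place of $(P,Q)$, the roles of relations and idempotents are interchanged, and ``all pairs of idempotents of $\mathcal{R}$ fuse'' becomes the statement ``all pairs of relations of the dual object fuse.''

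The cleanest route is therefore as follows. First I would recall that $\mathcal{R}$ always admits a well-defined notion of fusion of idempotents, governed by $Q$ exactly as fusion of relations is governed by $P$, together with Lemma~\ref{lem:pairs}, which already gives a one-to-one correspondence between fusing pairs of relations and fusing pairs of idempotents. Assuming that all pairs of idempotents fuse, the correspondence in Lemma~\ref{lem:pairs} shows that all pairs of relations fuse as well. Then Theorem~\ref{thm:pairs} applies directly and yields that $\mathcal{R}$ is amorphic. In other words, the proposition is essentially immediate once one has Lemma~\ref{lem:pairs} and Theorem~\ref{thm:pairs} in hand: the hypothesis about idempotents transfers to a hypothesis about relations, and the earlier theorem finishes the job.

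I do not expect any genuine obstacle here; the only point that requires a line of care is making sure that ``all pairs of idempotents fuse'' really does imply ``all pairs of relations fuse'' rather than merely the existence of some matching fusion. This is exactly what the one-to-one correspondence in Lemma~\ref{lem:pairs} provides: each fusing pair of idempotents is the image of a unique fusing pair of relations, so if every pair of idempotents fuses and the total count of fusing pairs on each side is the same, then every pair of relations must fuse too. After that, one simply quotes Theorem~\ref{thm:pairs}. Alternatively, and perhaps even more transparently, one could argue directly via self-duality once amorphicity is not yet known by instead running the proof of Theorem~\ref{thm:pairs} with $Q$ in place of $P$ throughout, using that $Q$ is nonsingular (since $PQ = vI$) and that Lemma~\ref{lem:rowscolsP} already applies to either eigenmatrix; this re-derivation is line-for-line identical, so I would only mention it as a remark rather than write it out.

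In summary, the proof is short: combine the $Q$-version of the Bannai--Muzychuk criterion, the one-to-one correspondence of Lemma~\ref{lem:pairs}, and Theorem~\ref{thm:pairs}. The main ``work'' is organizational---stating the duality correctly---rather than computational.
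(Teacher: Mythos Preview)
Your argument is correct and matches the paper's own justification essentially verbatim: the paper states that the proposition is ``clear from Lemma~\ref{lem:pairs} and Theorem~\ref{thm:pairs},'' which is precisely the route you take---the one-to-one correspondence of Lemma~\ref{lem:pairs} forces all pairs of relations to fuse once all pairs of idempotents do, and then Theorem~\ref{thm:pairs} finishes. Your aside about rerunning the proof of Theorem~\ref{thm:pairs} with $Q$ in place of $P$ is also sound but, as you note, unnecessary.
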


In order to dualize our main results, we however need to dualize also the tools that we used. 

\subsection{Strongly regular idempotents}

Let $\mathcal{R}$ be a $d$-class association scheme with relations $A_0,A_1,\ldots,A_d$, idempotents $E_0$, $E_1$, $\ldots$, $E_d$ and second eigenmatrix $Q$. Recall that for each $j\in[d]$, the entries $Q_{i,j}, i\in [d]$ are the restricted dual eigenvalues of $E_j$. Then an idempotent $E$ in $\mathcal{R}$ is called \emph{strongly regular} if $E$ has exactly two restricted dual eigenvalues.
We note that it follows from Lemma \ref{lem:rowscolsP} (with $t=d$) that an idempotent $E \neq E_0$ has at least two restricted dual eigenvalues, unless $d=1$.

We note also that by the dual Bannai-Muzychuk criterion, a strongly regular idempotent $E_j$ with restricted dual eigenvalues $a$ and $b$ generates a $2$-class fusion scheme, with non-trivial idempotents $E_j$ and $I-E_j-\frac1vJ$, and non-trivial, strongly regular relations $B_1 = \sum_{i>0: \ Q_{i,j} = a} A_i$ and $B_2 = \sum_{i>0: \ Q_{i,j} = b} A_i$. This implies the following for $a$ and $b$.

\begin{lemma}\label{lem:Smith}
    Let $E$ be a strongly regular idempotent with rank $m$ and dual restricted eigenvalues $a, b$ with $a > b$, and let $\mathcal{R}$ be the $2$-class association scheme with $E_1=E$. Then 
    \begin{itemize}
      \item $m \geq a \geq 0 > -1 \geq b \geq -m$, 
      \item
        $ab = q^2_{11} - m$,
        \item $a+b = q^1_{11} - q^2_{11}$.
    \end{itemize}
\end{lemma}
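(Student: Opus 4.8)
The plan is to work entirely inside the $2$-class association scheme $\mathcal{R}$ with $E_1 = E$, and to translate the three claims into statements about its second eigenmatrix $Q$, which for a $2$-class scheme is a $3\times 3$ matrix whose principal part is governed by the strongly regular relation $B_1$ (dual to $E$ via the dual Bannai-Muzychuk criterion, as recalled just before the statement). Concretely, I would first write down that $E_1=E$ has exactly the two restricted dual eigenvalues $a$ and $b$, so the column of $Q$ indexed by $j=1$ has entries $Q_{01}=m$ (the rank), and then $a$ and $b$ appearing on the two nontrivial relations $B_1,B_2$. Then I would invoke the standard relations $QP=vI$ together with the row-sum conditions on $Q$ (the analogue of $P$ having row sums $v$ on the all-ones eigenvector), and the Krein parameter recursion $vE_1\circ E_1 = \sum_h q_{11}^h E_h$ read off on the idempotents $E_0,E_1,E_2$. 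Reading this last identity on $E_1$ gives the coefficient $q_{11}^1$, and on $E_2$ gives $q_{11}^2$; expressing $E_1\circ E_1$ in terms of the dual eigenvalues $a,b$ and comparing coefficients yields, exactly as in the classical Smith-type computation for strongly regular graphs (but with the roles of ordinary and Hadamard product interchanged, i.e.\ in the $Q$-picture), the two algebraic identities $ab = q_{11}^2 - m$ and $a+b = q_{11}^1 - q_{11}^2$. These are the second and third bullet points, and they are essentially a bookkeeping exercise once the dual eigenmatrix is set up correctly.

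For the first bullet, the inequalities, I would argue as follows. Since $E$ is a genuine minimal idempotent of rank $m$, it is a nonzero positive semidefinite matrix of rank $m$, so its (non-dual, ordinary) eigenvalues are $0$ and $1$; but here we need the \emph{dual} eigenvalues $a,b$ of $E$ as an element of the Bose-Mesner algebra expanded in the $A_i$. The cleanest route is to use the fact that $B_1 = \sum_{i>0:\ Q_{i1}=a} A_i$ is a genuine strongly regular graph in the fusion scheme, with valency $\nu_1$ say, and with restricted eigenvalues that are simple affine functions of $a$ and $b$; its eigenvalue interlacing / integrality constraints, together with $B_1$ being a nonempty non-complete graph on $v$ vertices, force $a\ge 0$ and $b\le -1$. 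For the outer bounds $a\le m$ and $b\ge -m$, I would use that $vE\circ E = \sum_h q_{11}^h E_h$ has nonnegative Krein parameters (Krein condition) and trace considerations: the dual valency $m = Q_{01}$ is the largest dual eigenvalue in absolute value because $E$ is, up to scaling, a projection, so $|a|,|b|\le m$; combined with $a\ge 0>b$ and $b\le -1$ this gives $m\ge a\ge 0>-1\ge b\ge -m$. I expect the main obstacle to be getting the direction of the duality exactly right — the lemma is the formal dual of the classical relations $\lambda-\mu = p_{11}^1 - p_{11}^2$, $k+\mu-\lambda \cdot(\dots)$ etc.\ for strongly regular graphs, but one must be careful that it is the Krein parameters $q_{11}^h$ (not intersection numbers) that appear, and that $m$ plays the role of the valency $k$. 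Once the dictionary ``$P\leftrightarrow Q$, $k\leftrightarrow m$, $p_{11}^h\leftrightarrow q_{11}^h$, ordinary product $\leftrightarrow$ Hadamard product'' is fixed, the proof reduces to citing or reproducing the standard $2$-class computation.

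Alternatively — and this may be the shortest write-up — I would simply note that the $2$-class scheme generated by $E$ is itself a $2$-class scheme, apply the classical eigenvalue/parameter relations for strongly regular \emph{graphs} to the relation $B_1$, and then transport them through $PQ = vI$ to the $Q$-side; the three bullet points are then immediate corollaries. In the final text I would state ``this follows from the dual Bannai-Muzychuk criterion together with the standard relations for $2$-class schemes (see \cite{BBIT})'' and include only the one-line Hadamard-product comparison $vE\circ E = \sum_h q_{11}^h E_h$ that pins down $q_{11}^1$ and $q_{11}^2$, leaving the routine algebra to the reader. The inequalities in the first bullet I would justify in one sentence each: $a\ge 0>-1\ge b$ because $a,b$ are the restricted eigenvalues of the strongly regular graph $B_1$, and $-m\le b$, $a\le m$ because $m$ is the dual valency and hence dominates the restricted dual eigenvalues in modulus.
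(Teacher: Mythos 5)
Your handling of the second and third bullets is essentially the paper's own argument: evaluate $vE_1\circ E_1=q^0_{11}E_0+q^1_{11}E_1+q^2_{11}E_2$ entrywise on the two non-trivial relations $B_1,B_2$ of the $2$-class fusion, conclude that $a$ and $b$ are the two roots of $x^2-(q^1_{11}-q^2_{11})x-(q^0_{11}-q^2_{11})=0$, and read off both identities from the coefficients. That part is fine and needs no change.

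The genuine gap is in the first bullet. You justify $a\ge 0>-1\ge b$ ``because $a,b$ are the restricted eigenvalues of the strongly regular graph $B_1$,'' but they are not: $a,b$ are restricted \emph{dual} eigenvalues, i.e.\ entries of $Q$, and if $r>s$ are the restricted eigenvalues and $k$ the valency of $B_1$, then $a=mr/k$ and $b=-m(r+1)/(v-k-1)$ --- this conversion (quoted by the paper from Brouwer--Van Maldeghem) is exactly the step your one-sentence justification skips. For the Petersen scheme, the rank-$5$ idempotent has dual eigenvalues $5/3$ and $-5/3$ while $B_1$ has eigenvalues $1$ and $-2$; in particular $a,b$ need not be integers, so the interlacing/integrality constraints you invoke do not apply to them directly. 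With the correct formulas, $m\ge a\ge 0$ and $b\ge-m$ are immediate (your positive-semidefiniteness bound $|Q_{i1}|\le m$ also covers the outer bounds), but $b\le -1$ is not automatic: it amounts to $m(r+1)\ge v-k-1$, which, using $m=\frac{(v-1)(-s)-k}{r-s}$, reduces to $-(s+1)\bigl((v-1)r+k\bigr)\ge 0$ and hence needs $s\le -1$ plus a short computation (or an appeal to the formal duality of $2$-class parameters). So the ``transport through $PQ=vI$'' you mention as an alternative is in fact the necessary route and must be carried out explicitly; as written, the sign pattern for $a$ and $b$ is asserted on the basis of a false identification.
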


\begin{proof}
    Consider the strongly regular relation $B_1$ in $\mathcal{R}$, let $k$ be its valency, and let  $r$ and $s$, with $r > s$ be the restricted eigenvalues of $B_1$. Then $k\geq r \geq 0 > -1 \geq s \geq -k$. By \cite[p.~23]{BvMSRG}, we obtain that $\{a ,b\} = \{ \frac{m r}{k}, -\frac{m(r+1)}{v-k-1} \}$, and so $m \geq a\geq 0>-1\geq b \geq -m$.

    Secondly, $v E_1 \circ E_1 = q^0_{11} E_0 + q^1_{11} E_1 + q^2_{11} E_2 = q^2_{11} I + \frac{1}{v}(q^0_{11} - q^2_{11}) J + (q^1_{11} - q^2_{11}) E_1$.
    By multiplying this equation entrywise, once by $B_1$ and once by $B_2$, we obtain that
    $x^2 - (q^1_{11} - q^2_{11}) x - (q^0_{11} - q^2_{11}) = 0$ has solutions for $x = a,b$. Thus, $ab = q^2_{11} - q^0_{11} = q^2_{11} - m$ and $a+b = q^1_{11} - q^2_{11}$.
\end{proof}

We note that also Suda \cite[Theorem 4.1 (1)]{Suda} studied the dual eigenvalues, of $Q$-polynomial schemes. 
The dual of Lemma \ref{lem:railway}, in the context of association schemes, is as follows.

\begin{lemma}\label{thm:daze}
    Let $d \geq 2$ and let $\mathcal{R}$ be a $d$-class association scheme on $v$ vertices. Assume that $E_1$ and $E_2$ are two strongly regular idempotents of ranks $m_1$ and $m_2$, respectively, such that $\{ Q_{i,1}: \ i\in[d] \} = \{ a_1, b_1\}$ and $\{ Q_{i,2}: \ i\in[d] \} = \{ a_2, b_2\}$, with $a_j \neq b_j$ for $j=1,2$. Let 
    \begin{align*}
        B_1 &= \sum_{i>0: \ Q_{i,1} = a_1, Q_{i,2} = a_2} A_i, & B_2 &= \sum_{i>0: \ Q_{i,1} = a_1, Q_{i,2} = b_2} A_i, \\
            B_3 &= \sum_{i>0: \ Q_{i,1} = b_1, Q_{i,2} = a_2} A_i, & B_4 &= \sum_{i>0: \ Q_{i,1} = b_1, Q_{i,2} = b_2} A_i.
    \end{align*}
    For each $i\in[4]$, let $\ell_i$ be the integer such that $B_iJ = \ell_i J$. Then 
    \small{\begin{align}\label{eq:indent}
        \ell_1 &= \frac{vb_1b_2 - (m_1-b_1)(m_2-b_2)}{(a_1-b_1)(a_2-b_2)},&
        \ell_2 &= -\frac{vb_1a_2 - (m_1-b_1)(m_2-a_2)}{(a_1-b_1)(a_2-b_2)},\\
        \ell_3 &= -\frac{va_1b_2 - (m_1-a_1)(m_2-b_2)}{(a_1-b_1)(a_2-b_2)},&
        \ell_4 &= \frac{va_1a_2 - (m_1-a_1)(m_2-a_2)}{(a_1-b_1)(a_2-b_2)}. \notag
    \end{align}}
    If $a_i > b_i$ for $i\in[2]$, then $\ell_2 > 0$ and $\ell_3 > 0$. 
\end{lemma}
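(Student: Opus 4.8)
The plan is to dualize the proof of Lemma~\ref{lem:railway}, replacing the matrix product by the entrywise product, the trace of a matrix $M$ by the sum $\mathbf{1}^\top M\mathbf{1}$ of all its entries, the first eigenmatrix $P$ by the second eigenmatrix $Q$, and valencies by ranks. First I would record the bookkeeping. Since each $B_s$ is a union of relations, it is regular with $\ell_s=\sum_{i\in S_s}k_i$, where $S_s$ is the index set defining $B_s$, and since $B_1+B_2+B_3+B_4=\sum_{i>0}A_i=J-I$ we get $\ell_1+\ell_2+\ell_3+\ell_4=v-1$. Next I would use the standard identity $\mathbf{1}^\top(E_j\circ A_i)\mathbf{1}=\operatorname{tr}(E_jA_i)=k_iQ_{ij}$ (equivalently $m_jP_{ji}=k_iQ_{ij}$). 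As $Q_{i,1}$ is constant, equal to $a_1$ or $b_1$, on each $S_s$, summing this identity over $i\in S_s$ gives $\mathbf{1}^\top(E_1\circ B_s)\mathbf{1}\in\{a_1\ell_s,\,b_1\ell_s\}$; summing over $s$ and using $\mathbf{1}^\top E_1\mathbf{1}=0$ and $\operatorname{tr}(E_1)=m_1$ then yields $a_1\ell_1+a_1\ell_2+b_1\ell_3+b_1\ell_4=-m_1$, and the same argument for $E_2$ gives $a_2\ell_1+b_2\ell_2+a_2\ell_3+b_2\ell_4=-m_2$.

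For the fourth equation I would compute $\mathbf{1}^\top(E_1\circ E_2\circ A_i)\mathbf{1}$: since $(E_j)_{x,y}=\tfrac1vQ_{i,j}$ on every one of the $vk_i$ pairs $(x,y)$ in relation $i$, this sum equals $\tfrac1vk_iQ_{i,1}Q_{i,2}$. Summing over $i\in S_s$ and then over $s$, and using $\mathbf{1}^\top(E_1\circ E_2)\mathbf{1}=\operatorname{tr}(E_1E_2)=0$ together with $\operatorname{tr}(E_1\circ E_2)=v\cdot\tfrac{m_1}{v}\cdot\tfrac{m_2}{v}=\tfrac{m_1m_2}{v}$, gives $a_1a_2\ell_1+a_1b_2\ell_2+b_1a_2\ell_3+b_1b_2\ell_4=-m_1m_2$. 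The resulting $4\times4$ system for $(\ell_1,\ell_2,\ell_3,\ell_4)$ has coefficient matrix $\left(\begin{smallmatrix}1&1\\a_1&b_1\end{smallmatrix}\right)\otimes\left(\begin{smallmatrix}1&1\\a_2&b_2\end{smallmatrix}\right)$, up to a permutation of rows, hence is nonsingular because $a_1\neq b_1$ and $a_2\neq b_2$; moreover it is precisely the system arising in the proof of Lemma~\ref{lem:railway} after the substitutions $k_1\mapsto m_1$, $k_2\mapsto m_2$ and restricted eigenvalues $\mapsto$ restricted dual eigenvalues, so solving it reproduces \eqref{eq:indent}.

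For the positivity claim I would assume $a_i>b_i$ and invoke Lemma~\ref{lem:Smith}, which yields $m_i\geq a_i\geq0>-1\geq b_i\geq-m_i$ (and $m_i\geq1$) for $i=1,2$. Then in $\ell_2=-\dfrac{vb_1a_2-(m_1-b_1)(m_2-a_2)}{(a_1-b_1)(a_2-b_2)}$ the denominator is positive, $vb_1a_2\leq0$, $m_1-b_1>0$ and $m_2-a_2\geq0$, so the numerator is $\leq0$ and would vanish only if $a_2=0$ and $m_2=a_2$, which is impossible; hence $\ell_2>0$, and $\ell_3>0$ by the symmetric argument. The main (though modest) obstacle is the derivation of the fourth equation, since that is where the Krein-type interaction between $E_1$ and $E_2$ enters and produces the correction terms ($vb_1b_2$, etc.) in \eqref{eq:indent}; everything else is a formal transcription of the argument for Lemma~\ref{lem:railway}.
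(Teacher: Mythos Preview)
Your proof is correct and follows essentially the same route as the paper: both set up the identical $4\times4$ linear system in $(\ell_1,\ell_2,\ell_3,\ell_4)$ from the relations $B_1+\cdots+B_4=J-I$, the orthogonality $PQ=vI$ (your trace identities $\mathbf{1}^\top(E_j\circ B_s)\mathbf{1}$), and the vanishing Krein parameter $q_{12}^0=0$ (your computation of $\mathbf{1}^\top(E_1\circ E_2\circ A_i)\mathbf{1}$), and both deduce positivity of $\ell_2,\ell_3$ from the bounds in Lemma~\ref{lem:Smith}. The only cosmetic difference is that the paper phrases the fourth equation via $(vE_1\circ vE_2)E_0=0$ rather than via $\operatorname{tr}(E_1E_2)=0$, which amounts to the same thing.
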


\begin{proof}
    First, we look at the last claim. 
    Applying Lemma~\ref{lem:Smith} for $j=1,2$, we obtain that $m_i \geq a_i\geq 0>-1\geq b_i \geq -m_i$ for $i\in[2]$. This proves that $\ell_2$ and $\ell_3$ are both positive. 
 
Now we turn to prove \eqref{eq:indent}. 
Since $B_1+B_2+B_3+B_4 = J - I$, it is clear that 
\begin{align}\label{eq:cholera}
    \ell_1+\ell_2+\ell_3+\ell_4 = v-1. 
\end{align}

Let $P$ be the first eigenmatrix of $\mathcal{R}$. It follows from $PQ = vI$ that 
\begin{align}\label{eq:fervent}
    m_1 + (\ell_1+\ell_2) a_1 + (\ell_3+\ell_4) b_1 = m_2 + (\ell_1+\ell_3) a_2 + (\ell_2+\ell_4) b_2 = 0. 
\end{align}
Because $q^0_{1,2}=0$, we have that
\begin{align}\label{eq:heedless}
     0&=(v E_1\circ v E_2) E_0  \\
      &=  (m_1m_2 I + a_1a_2 B_1 + a_1b_2 B_2 + b_1a_2 B_3 + b_1b_2 B_4) E_0 \notag \\ 
    &=(m_1m_2+ a_1a_2\ell_1 + a_1b_2\ell_2 + b_1a_2\ell_3 + b_1b_2\ell_4) E_0.     \notag 
\end{align}
Finally, \eqref{eq:indent} follows easily from the system of linear equations given by \eqref{eq:cholera},\eqref{eq:fervent}, and \eqref{eq:heedless}. 
\end{proof}

\subsection{Latin square type idempotents}\label{sec:LSidempotents}

Let $\mathcal{R}$ be a scheme on $v$ vertices.
We say that a strongly regular idempotent of rank $m$ in $\mathcal{R}$ is of Latin square type if $v=n^2$, $m=t(n-1)$, and the restricted dual eigenvalues are $n-t$ and $-t$, for some positive integers $n$ and $t$. It is of negative Latin square type if $n$ and $t$ are negative integers. As in the dual case, we call such an idempotent of strictly (negative) Latin square type if $m \neq \frac12(n^2-1)$, thus excluding being of both types. 

From these definitions and Lemma \ref{thm:daze}, it is clear now that also the dual of Lemma \ref{lem:sharingeigenvaluesLS} holds.
Finally, we need to show the dual of Lemma \ref{lem:SRGkandaNLS}.

\begin{lemma}\label{lem:dualSRGkandaNLS}
    Let $E$ be a strongly regular idempotent on $n^2$ vertices and rank $m$, having a dual restricted eigenvalue $a$, such that $m=-a(n-1)$. Then $E$ is of Latin square type or of negative Latin square type.
\end{lemma}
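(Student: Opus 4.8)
The plan is to mimic the proof of Lemma~\ref{lem:SRGkandaNLS} almost verbatim, with the second eigenmatrix taking over the role of the first. Let $b$ denote the other dual restricted eigenvalue of $E$, and pass, as in Lemma~\ref{lem:Smith}, to the $2$-class association scheme in which $E_1 = E$. Observe first that the rank $m$ is at least $1$, so the hypothesis $m = -a(n-1)$ forces $a \neq 0$ and $n \neq 1$, and moreover $n \neq 0$ because $v = n^2$.

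The proof of Lemma~\ref{lem:SRGkandaNLS} combines two standard strongly regular graph identities; here I would use their duals. The first is Lemma~\ref{lem:Smith}: since the product and the sum of the two dual restricted eigenvalues of $E$ do not depend on which of them the hypothesis singles out, we have $q^{2}_{11} = ab + m$ and $q^{1}_{11} = a + b + q^{2}_{11} = a + b + ab + m$, whence $m - 1 - q^{1}_{11} = -1 - a - b - ab$. The second is the dual handshake identity: comparing the (constant) diagonal entries on both sides of $v E_1 \circ E_1 = \sum_{h} q^{h}_{11} E_h$ gives $\sum_{h} q^{h}_{11} m_h = m^2$, that is, $m + q^{1}_{11} m + q^{2}_{11}(v - 1 - m) = m^2$, which rearranges to $m(m - 1 - q^{1}_{11}) = q^{2}_{11}(v - 1 - m)$. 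Putting the two together yields
\[ m(-1 - a - b - ab) = (ab + m)(v - 1 - m). \]
Now I would substitute $v = n^2$ and $m = -a(n-1)$, noting that $ab + m = a(b - n + 1)$ and $v - 1 - m = (n-1)(n + 1 + a)$, and cancel the nonzero factor $a(n-1)$ to obtain $1 + a + b + ab = (b - n + 1)(n + 1 + a)$. Expanding the right-hand side, all terms cancel against the left except $n(b - a - n)$, so $n(b - a - n) = 0$, and since $n \neq 0$ we get $n = b - a$. Setting $t = -a$, this says precisely that $v = n^2$, $m = t(n-1)$, and the dual restricted eigenvalues of $E$ are $-t$ and $n - t$, with $t \neq 0$; so $E$ is of Latin square type when $n > 0$ and of negative Latin square type when $n < 0$.

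I do not expect a genuine obstacle here: the argument is a faithful transposition of the (short) proof of Lemma~\ref{lem:SRGkandaNLS}, with $(k,\lambda,\mu)$ replaced by $(m,q^{1}_{11},q^{2}_{11})$ and the relevant identities replaced by Lemma~\ref{lem:Smith} and $\sum_{h} q^{h}_{11} m_h = m^2$. The only bookkeeping points that need a word of care are that the conclusions of Lemma~\ref{lem:Smith} used above are the symmetric ones (and hence insensitive to the ordering of $a$ and $b$), and the same integrality remark that is already implicit in Lemma~\ref{lem:SRGkandaNLS}: $n$ and $t$ are integers because $a$ and $b$ are rational --- they are irrational only if the strongly regular relations of the $2$-class scheme are conference graphs, but a conference graph on $n^2$ vertices has $v \equiv 1 \pmod 4$ and hence integral eigenvalues, so that case does not arise.
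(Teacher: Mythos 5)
Your proof is correct and takes essentially the same route as the paper's: Lemma~\ref{lem:Smith} gives $m-1-q^1_{11}=-1-a-b-ab$, and your diagonal-entry computation of $vE_1\circ E_1$ (i.e.\ $\sum_h q^h_{11}m_h=m^2$) yields exactly the identity $m(m-1-q^1_{11})=q^2_{11}(v-1-m)$ that the paper instead obtains via \cite[Proposition~2.24]{BBIT}, after which the substitution $v=n^2$, $m=-a(n-1)$ and the conclusion $n=b-a$ coincide with the paper's. Your closing rationality/integrality aside is a harmless extra remark; the paper leaves that point implicit, just as in Lemma~\ref{lem:SRGkandaNLS}.
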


\begin{proof}
Let $v=n^2$. Consider the $2$-class association scheme generated by $E$ and let $b$ be its other restricted dual eigenvalue. 
Write $m_1 = m$ and $m_2 = v-1-m$. 
First, we note that by \cite[Proposition~2.24]{BBIT}, we have that
$q^2_{11}(v-1-m) = m_2 q^2_{11} = m_1 q^1_{21} = m(m-1-q^1_{11})$.
Now it follows from Lemma~\ref{lem:Smith} that $m-1-q^1_{11}=-1-a-b-ab=q^2_{11}(v-1-m)/m$. 
Similar as in the proof of Lemma \ref{lem:SRGkandaNLS} we now obtain that $E$ is of (negative) Latin square type, by using $m=-a(n-1)$.
\end{proof}

\subsection{Main dual results}

Finally, we note that Lemma \ref{lem:hemlock} can easily be dualized by replacing relations by idempotents and $P$ by $Q$.
Thus, all our tools are dualized, and we can conclude with the following main dual results.

\begin{prop}\label{thm:dualLStype}
    Let $\mathcal{R}$ be an association scheme. If there is at most one idempotent in $\mathcal{R}$ that is neither strongly regular of Latin square type nor strongly regular of negative Latin square type, then $\mathcal{R}$ is amorphic. 
\end{prop}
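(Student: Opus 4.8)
The plan is to mirror the proof of Theorem~\ref{thm:LStype} step by step, substituting idempotents for relations, $Q$ for $P$, ranks $m_i$ for valencies $k_i$, and the dual lemmas (Lemma~\ref{thm:daze}, Lemma~\ref{lem:dualSRGkandaNLS}, and the dual of Lemma~\ref{lem:sharingeigenvaluesLS}) for their primal counterparts. First I would reduce to the genuinely mixed case: if all but possibly one idempotent are of the same (negative) Latin square type, then the remaining idempotent is $I-\frac1vJ$ minus the sum of the others, so by the dual of Lemma~\ref{lem:union} it is of that same type too, and the scheme is amorphic by the dual of \cite{IMY} (equivalently, the dual of Theorem~\ref{thm:pairs} together with the fact that same-type idempotents fuse pairwise). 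Thus I may assume there is at least one strictly Latin square type idempotent $E_1$ and at least one strictly negative Latin square type idempotent $E_2$.

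Next I would rule out conference-graph-type idempotents among the remaining ones by exactly the argument used in Theorem~\ref{thm:LStype}: by the dual of Lemma~\ref{lem:sharingeigenvaluesLS}, $E_1$ and $E_2$ share an ``eigenvector'' (here: a relation $A_i$ whose dual eigenvalue is positive on $E_1$ and negative on $E_2$); a conference-type idempotent, being of both types, would need its dual eigenvalue on that index to be simultaneously negative (matching $E_1$'s type) and positive (matching $E_2$'s type), a contradiction. So every idempotent is of strict (negative) Latin square type except possibly one exceptional idempotent, and I set $d_1,d_2,d_3$ to be the numbers of strictly Latin square type, strictly negative Latin square type, and remaining idempotents, with $d_3\le 1$ and $d=d_1+d_2+d_3$.

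Then I would run the counting argument. Since a strictly Latin square type idempotent and a strictly negative Latin square type idempotent share a relation for each of the four dual-eigenvalue combinations, $d\ge 4$. In the case $d_1=1$: for each strictly negative Latin square type idempotent $E_i$, there is a relation on which $E_i$ is negative and $E_1$ positive, and one on which both are negative, and these relations are distinct across different $E_i$ (same-type idempotents do not share a relation for their negative dual eigenvalue, by the dual of Lemma~\ref{lem:sharingeigenvaluesLS}); together with at least one relation on which all strictly negative Latin square type idempotents are positive (coming from the amorphic fusion scheme obtained by fusing the $d_1+d_3$ remaining idempotents, via the dual of Lemma~\ref{lem:hemlock}), this forces $d\ge 2d_2+1$, hence $d_3\ge d_2\ge 2$, contradicting $d_3\le1$. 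If $d_1,d_2\ge 2$, the same kind of count over all mixed pairs gives $d\ge d_1d_2+1$, i.e. $d_3\ge(d_1-1)(d_2-1)$, which contradicts $d_3\le1$ unless $d_1=d_2=2$, $d_3=1$. Finally, in that last case I would write down the $5\times 5$ principal part of $Q$ in the forced canonical-like form (two rows of Latin square type, two of negative Latin square type, one exceptional), use the two relations $\ell_i$-type identities $Q_{i,1}+Q_{i,2}$ being constant across the paired idempotents to pin the columns, and conclude that $Q$ is singular — the contradiction. At the point where I invoke Lemma~\ref{lem:SRGkandaNLS}-type reasoning I would instead use Lemma~\ref{lem:dualSRGkandaNLS}; this is only needed if the structure of $Q$ does not already force everything, but in fact, just as in the primal proof, the $d_1=d_2=2$ case closes purely by singularity of $Q$.

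The main obstacle I anticipate is bookkeeping the ``shared relation'' count correctly in the dual setting: in the primal proof one counts \emph{idempotents} on which pairs of strongly regular relations have prescribed eigenvalue signs, whereas here one counts \emph{relations} on which pairs of strongly regular idempotents have prescribed dual-eigenvalue signs. One must check that the dual of Lemma~\ref{lem:sharingeigenvaluesLS} genuinely gives (a) a common relation for each of the four sign combinations between a strict LS and a strict NLS idempotent, and (b) no common relation for the ``matching extreme'' combination between two idempotents of the same type — this is exactly what Lemma~\ref{thm:daze} delivers via $\ell_2,\ell_3>0$ and the vanishing of the appropriate $\ell$ in the same-type case. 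Once those two facts are in hand, the inequalities $d\ge 2d_2+1$ and $d\ge d_1d_2+1$ and the final singularity computation go through verbatim as in the proof of Theorem~\ref{thm:LStype}.
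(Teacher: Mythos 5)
Your proposal is correct and coincides with the paper's own (implicit) proof: the paper establishes Proposition~\ref{thm:dualLStype} precisely by dualizing the proof of Theorem~\ref{thm:LStype} with the tools of Section~\ref{sec:dualization} (Lemma~\ref{thm:daze}, the dual of Lemma~\ref{lem:sharingeigenvaluesLS}, the dual IMY/canonical-form argument applied to $Q$), and your step-by-step mirroring, including the counts $d\ge 2d_2+1$ and $d\ge d_1d_2+1$ and the final singularity of $Q$, is exactly that. One small citation slip: the relation on which all strictly negative Latin square type idempotents have positive dual eigenvalue does not come from the dual of Lemma~\ref{lem:hemlock}, but from the dual of Lemma~\ref{lem:union} (the sum of those idempotents is again a strongly regular idempotent of negative Latin square type, so its positive restricted dual eigenvalue must occur on some relation), which is the dual of what the paper's primal argument uses.
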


\begin{prop}\label{thm:dualmainresult}
    Let $\mathcal{R}$ be a $d$-class association scheme with $d\geq 3$ and let $\Gamma = \EGraph(\mathcal{R})$ be its fusing-idempotents graph. If $\Gamma$ is a connected graph that is not a path, then $\mathcal{R}$ is amorphic. 
\end{prop}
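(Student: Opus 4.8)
The plan is to transcribe the proof of Theorem~\ref{thm:mainresult} essentially verbatim, replacing every object and tool by its dual: relations by idempotents, valencies $k_i$ by ranks $m_i$, the first eigenmatrix $P$ by the second eigenmatrix $Q$, ``eigenvalue of a relation on an idempotent'' by ``dual eigenvalue of an idempotent on a relation'', Lemma~\ref{lem:railway} by Lemma~\ref{thm:daze}, Lemma~\ref{lem:SRGkandaNLS} by Lemma~\ref{lem:dualSRGkandaNLS}, Theorem~\ref{thm:LStype} by Proposition~\ref{thm:dualLStype}, and Lemma~\ref{lem:hemlock} (together with the contraction/$\fuse$ bookkeeping) by its dualized form. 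Lemma~\ref{lem:pairs}, which supplies the one-one correspondence between fusing pairs, is already symmetric in relations and idempotents, so the fusing-idempotents graph and its contractions behave exactly like their relation counterparts. The skeleton is then: for $d=3$ the statement is Proposition~\ref{thm:dualpairs}; for $d\geq 4$, a connected non-path graph is either Hamiltonian --- handled by the dual of Proposition~\ref{thm:Hcycle} --- or it contains $K_{1,3}$, handled by induction on $d$ with base case the dual of Lemma~\ref{lem:K13}.

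First I would record the dual of Proposition~\ref{thm:Hcycle}: if $\EGraph(\mathcal{R})$ is Hamiltonian, then $\mathcal{R}$ is amorphic. Induct on $d$; the base $d=3$ is Proposition~\ref{thm:dualpairs}. For $d\geq 4$, take two consecutive idempotents $E_i,E_j$ on a Hamiltonian cycle; by the dualized Lemma~\ref{lem:hemlock}, fusing them yields a $(d-1)$-class scheme whose fusing-idempotents graph contains the contracted cycle and is therefore Hamiltonian, hence amorphic by the induction hypothesis. An amorphic scheme is formally self-dual, so all its idempotents are strongly regular of one common (negative) Latin square type; among the idempotents of the fused scheme are the $d-2$ idempotents of $\mathcal{R}$ other than $E_i,E_j$, so these are all of one type. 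Running this around the cycle (and using $d\geq 4$ to guarantee that consecutive complements overlap), all idempotents of $\mathcal{R}$ are strongly regular of one common type, and $\mathcal{R}$ is amorphic by Proposition~\ref{thm:dualLStype}.

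Next comes the one sub-result needing genuine computation: the dual of Lemma~\ref{lem:K13}, i.e.\ a $4$-class scheme whose fusing-idempotents graph contains $K_{1,3}$ is amorphic. Assuming edges $12,13,14$ of $\EGraph(\mathcal{R})$ and reading off the columns of $Q$ exactly as Lemma~\ref{lem:K13} reads off the rows of $P$, one shows that the correspondences $\{1,2\}\correspond\{1,2\}$, $\{1,3\}\correspond\{1,3\}$, $\{1,4\}\correspond\{1,4\}$ may be assumed: the alternatives $\{1,4\}\correspond\{2,3\}$, $\{2,4\}$, $\{3,4\}$ are excluded using that two strongly regular idempotents always admit a relation on which one has nonnegative and the other negative dual eigenvalue, which is the positivity part ($\ell_2,\ell_3>0$) of Lemma~\ref{thm:daze}. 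With the resulting shape of $Q$, each idempotent $E_i$ ($i\in\{2,3,4\}$) has dual eigenvalue $b_i$ on the single relation $A_i$, so for every pair $i,j$ in $\{2,3,4\}$ the block $B_4$ of Lemma~\ref{thm:daze} is empty, i.e.\ $\ell_4=0$, giving $v\,a_i a_j=(m_i-a_i)(m_j-a_j)$. This system forbids any $a_i=0$ and forces $(m_i-a_i)/a_i=\pm\sqrt v$ with a common sign, so $v$ is a square, say $v=n^2$, and $m_i=-a_i(\tilde n-1)$ for a suitable sign of $\tilde n$ with $\tilde n^2=v$. Then Lemma~\ref{lem:dualSRGkandaNLS} shows $E_2,E_3,E_4$ are strongly regular of (negative) Latin square type, and Proposition~\ref{thm:dualLStype} finishes this case.

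Finally, the inductive combination is identical to the end of the proof of Theorem~\ref{thm:mainresult}: for $d\geq 5$ with $\EGraph(\mathcal{R})$ containing $K_{1,3}$, choose two distinct edges $uv$, $uv'$ so that $\EGraph(\mathcal{R})/uv$ and $\EGraph(\mathcal{R})/uv'$ are both connected and contain $K_{1,3}$ (any two edges of a star; otherwise any two edges through a second vertex of degree at least $2$), apply the dualized Lemma~\ref{lem:hemlock} and the induction hypothesis to deduce that all idempotents except $E_u,E_v$, and also all except $E_u,E_{v'}$, are strongly regular of a common type, hence (the two exceptional pairs overlapping only in $E_u$, since $d\geq 5$) all idempotents but at most $E_u$ are strongly regular of (negative) Latin square type; conclude by Proposition~\ref{thm:dualLStype}. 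The main obstacle is thus not conceptual but one of bookkeeping: one must verify that the column-reading of $Q$ in the dual of Lemma~\ref{lem:K13} really reproduces the same sign and zero-multiplicity constraints that the row-reading of $P$ produced. Since Lemma~\ref{thm:daze} already packages the positivity ($\ell_2,\ell_3>0$) and the explicit valency formulas in the form required, and Lemma~\ref{lem:dualSRGkandaNLS} replaces Lemma~\ref{lem:SRGkandaNLS} cleanly, no new estimates arise, and the rest is mechanical transcription.
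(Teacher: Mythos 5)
Your proposal is correct and follows exactly the route the paper intends: the paper's own justification of Proposition~\ref{thm:dualmainresult} is simply that all tools (Lemma~\ref{lem:hemlock}, Lemma~\ref{thm:daze}, Lemma~\ref{lem:dualSRGkandaNLS}, Proposition~\ref{thm:dualLStype}, Proposition~\ref{thm:dualpairs}) have been dualized, so the proof of Theorem~\ref{thm:mainresult} transcribes verbatim with $P$ replaced by $Q$, valencies by ranks, and relations by idempotents. You merely spell out this transcription (Hamiltonian case, dual $K_{1,3}$ base case, and the $d\geq 5$ induction) in more detail than the paper does, and the details check out.
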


\section{Final remarks and problems}\label{sec:finalremarks}

We finish this paper with a few remarks and open problems.

First of all, we note that if one of the fusion-relations graph and fusing-idempotents graph of a given scheme is connected, then they must be isomorphic: either they are both paths or complete. It would be interesting to determine whether there are schemes for which the fusing-relations graph and fusing-idempotents graph are {\em not} isomorphic. In fact, what can be said  in general about the connected components of these graphs: are they all paths or cliques ?

Secondly, if two of the relations/idempotents of an association scheme are strongly regular and both of Latin square type or both of negative Latin square type, can they always be fused? A possible starting point would be to determine 4-class schemes on $n^2$ vertices that have two relations that represent two ``orthogonal"  partitions into $n$ cliques.

Finally, we note that our results seem to rely just on the algebraic properties of the
eigenmatrices and intersection numbers, so the results may apply to table algebras \cite{PB} as well.

\section*{Declaration of competing interest}

The authors declare that they have no conflict of interest.

\section*{Data availability}
No data was used for the research described in the article.

\section*{Acknowledgement} The authors thank the anonymous referees and Bill Martin for their detailed comments and suggestions. This work is supported by the National Key R. and D. Program of China (No. 2020YFA0713100), the National Natural Science Foundation of China (No. 12071454, 12471335) and the Anhui Initiative in Quantum Information Technologies (No. AHY150000). The work obtained was mostly done while Yanzhen Xiong was working as a postdoctoral fellow in the University of Science and Technology of China.

\end{document}